\documentclass[11pt]{amsart}
\usepackage{amsmath}
\usepackage{amssymb}
\usepackage{amsthm}
\usepackage{array}
\usepackage{xy}
\usepackage[pdftex]{graphicx}
\usepackage{hyperref}
\usepackage{color}
\usepackage{transparent}
\usepackage{latexsym}

\setlength{\voffset}{0 cm} \setlength{\oddsidemargin}{0.5cm} \setlength{\evensidemargin}{0.5cm}
 \setlength{\textwidth}{16.0cm} \setlength{\textheight}{20.5cm}

\numberwithin{equation}{section}

\newtheorem{thm}{Theorem}[section]
\newtheorem{cor}[thm]{Corollary}
\newtheorem{lem}[thm]{Lemma}

\newtheorem{rem}[thm]{Remark}

\newcommand{\R}{\mathbb{R}}

\newcommand{\Lap}{\Delta}

\newcommand{\into}{\rightarrow}

\newcommand{\grad}{\nabla}

\newcommand{\norm}[1]{\left\Vert#1\right\Vert}

\begin{document}
\title[A quantitative log-Sobolev inequality for a two parameter family of functions]{A quantitative log-Sobolev inequality for a two parameter family of functions}
\author[E. Indrei]{E. Indrei}
\author[D. Marcon]{D. Marcon}

\date{}

\maketitle


\def\signei{\bigskip\begin{center} {\sc Emanuel Indrei\par\vspace{3mm}
Department of Mathematics\\
The University of Texas at Austin\\
1 University Station, C1200\\
Austin TX 78712, USA\\
email:} {\tt eindrei@math.utexas.edu}
\end{center}}

\def\signdm{\bigskip\begin{center} {\sc Diego Marcon\par\vspace{3mm}
Center for Mathematical Analysis,\\
Geometry, and Dynamical Systems\\
Departamento de Matem\'atica\\
Instituto Superior T\'ecnico\\
Lisboa 1049-001, Portugal\\
email:} {\tt dmarcon@math.utexas.edu}
\end{center}}

\begin{abstract}
We prove a sharp, dimension-free stability result for the classical logarithmic Sobolev inequality for a two parameter family of functions. Roughly speaking, our family consists of a certain class of log $C^{1,1}$ functions. Moreover, we show how to enlarge this space at the expense of the dimensionless constant and the sharp exponent. As an application we obtain new bounds on the entropy. 
\end{abstract}

\section{Introduction}
\subsection{Overview}
Sobolev-type inequalities are central tools in analysis. The so-called logarithmic Sobolev inequalities appear in various branches of statistical mechanics, quantum field theory, and mathematical statistics. For example, the Gaussian log-Sobolev inequality is equivalent to Nelson's hypercontractive inequality and one may use log-Sobolev inequalities to show the stabilization of the Glauber-Langevin dynamic stochastic models for the Ising model with real spins, see for instance \cite{Roy, Gro}. 

Moreover, they are also useful in partial differential equations and Riemannian geometry. Indeed, they showed up in Perelman's work on the Ricci flow and the Poincar\`{e} conjecture \cite{Per}. While there is a large body of literature available on these inequalities, there are few corresponding stability results and this is currently an active area of research. Figalli, Maggi, and Pratelli \cite{Fi5} have recently addressed the stability problem for the anisotropic $1$-log-Sobolev inequality; however, stability for the Gaussian log-Sobolev inequality -- the classical version attributed to Stam \cite{St}, Federbush \cite{Fe}, and Gross \cite{Gro}  -- is still open. 

In this paper, we address this problem for a two parameter family of functions. Our approach involves techniques from optimal transport theory. Indeed, this theory has proven useful in producing sharp geometric and functional inequalities (see e.g. \cite{Fi}, \cite{FiI}).

\subsection{Main result}
The classical Gaussian log-Sobolev inequality states that for smooth, positive functions
$$\int_{\mathbb{R}^n} f(x) \log f(x) d\gamma (x) - ||f||_{L^1(d\gamma)} \log\big (||f||_{L^1(d\gamma)}\big) \leq \frac{1}{2} \int_{\mathbb{R}^n} \frac{|\nabla f (x)|^2}{f(x)}  \ d\gamma (x),$$
where $d\gamma := (2\pi)^{-n/2}e^{-|x|^2/2}dx$ is the standard Gaussian measure. The right hand side of the inequality is known as the \textit{Fisher information} and is often denoted by $I(f)$ whereas the left hand side is the \textit{entropy} and represented by $Ent(f)$. It is well-known that equality holds if and only if $f$ is log linear (i.e. $f(x)=e^{a\cdot x+b}$).
For $\epsilon>0$ and $M>0$, consider the family of functions:
$$\mathcal{F}(\epsilon, M) := \Bigl\{e^{-h}: (-1+\epsilon)\leq D^2 h \leq M \Bigr\},$$ and denote the \textit{log-Sobolev deficit} by
\begin{align*}
\delta(f)&:=\frac{1}{2} \int_{\mathbb{R}^n} \frac{|\nabla f (x)|^2}{f(x)}  \ d\gamma (x) - \int_{\mathbb{R}^n} f(x) \log f(x) d\gamma (x) + ||f||_{L^1(d\gamma)} \log\big (||f||_{L^1(d\gamma)}\big)\\
&=\frac{1}{2}I(f) - Ent(f).
\end{align*}
Note that $\delta \geq 0$ by the log-Sobolev inequality. The main result of this paper is the following theorem:

\begin{thm} \label{logsob}
There exists an explicit dimensionless constant $C=C(\epsilon, M)>0$ so that for all $f \in \mathcal{F}(\epsilon, M)$, $$W_2\Big(f(\cdot)e^{-(\langle \mu, \cdot \rangle + |\mu|^2/2 + \log(m))} d\gamma, d\gamma \Big) \leq C\delta(f/m)^{\frac{1}{2}},$$ where $W_2$ is the Wasserstein metric, $m=||f||_{L^1(d\gamma)}$, and $\mu$ is the barycenter of f.
\end{thm}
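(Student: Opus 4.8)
The plan is to render quantitative the optimal-transport proof of the logarithmic Sobolev inequality (Cordero-Erausquin), using the two-sided bound $(-1+\eps)I\le D^2h\le MI$ to upgrade the two nonnegative ``error terms'' that this proof produces into a genuine $W_2$ estimate. Throughout I would normalize so that $\|f\|_{L^1(d\gamma)}=1$ (replacing $f$ by $f/m\in\F(\eps,M)$, which does not change $\delta(f/m)$); then $f\gamma$ is a probability measure with barycenter $\mu=\int_{\R^n}x\,f\,d\gamma$, and writing $\widetilde f:=f(\cdot)e^{-(\langle\mu,\cdot\rangle+|\mu|^2/2)}$ the claim becomes $W_2(\widetilde f\gamma,d\gamma)\le C(\eps,M)\,\delta(f)^{1/2}$.

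Write $f=e^{-h}$ and let $\grad\varphi$ be the Brenier map pushing $f\gamma$ onto $\gamma$, with $\varphi$ convex; put $\varphi(x)=\tfrac12|x|^2+\zeta(x)$. Starting from the Monge--Amp\`ere equation satisfied by $\grad\varphi$, taking logarithms, changing variables $y=\grad\varphi(x)$ in $Ent(f)=\int f\log f\,d\gamma$, integrating by parts against $\gamma$, and using the elementary inequalities $\log\det(I+S)\le\operatorname{tr}S$ (valid when $I+S>0$) and $2a\cdot b\le|a|^2+|b|^2$, one arrives at the exact identity
\[
\delta(f)=\int_{\R^n}\bigl(\operatorname{tr}(D^2\zeta)-\log\det(I+D^2\zeta)\bigr)f\,d\gamma+\frac12\int_{\R^n}|\grad\zeta-\grad h|^2 f\,d\gamma,
\]
both terms being nonnegative; in particular the first integral is at most $\delta(f)$.

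The key point is then a priori regularity. The density of $f\gamma$ equals $e^{-V}$ with $\eps I\le D^2V=I+D^2h\le(M+1)I$, so Caffarelli's contraction theorem, applied to $\grad\varphi$ and to $(\grad\varphi)^{-1}$, gives $\sqrt{\eps}\,I\le D^2\varphi\le\sqrt{M+1}\,I$; equivalently, the eigenvalues of $D^2\zeta$ lie in the fixed compact interval $J=[\sqrt{\eps}-1,\sqrt{M+1}-1]\Subset(-1,\infty)$. On $J$ one has $t-\log(1+t)\ge c_0(\eps,M)\,t^2$, hence
\[
\int_{\R^n}\|D^2\zeta\|^2 f\,d\gamma\le\frac1{c_0(\eps,M)}\int_{\R^n}\bigl(\operatorname{tr}(D^2\zeta)-\log\det(I+D^2\zeta)\bigr)f\,d\gamma\le\frac{\delta(f)}{c_0(\eps,M)}.
\]
This is the step I expect to be the crux: passing from the log-determinant defect to the $L^2$ norm of $D^2\zeta$ needs the spectrum of $D^2\zeta$ to be trapped in a compact subinterval of $(-1,\infty)$, which is exactly what the two-sided Hessian bound provides; without the upper bound $M$ the function $t-\log(1+t)$ is only linear at large positive $t$, this $L^2$ control fails, and only a weaker exponent survives --- presumably the mechanism behind the stated possibility of enlarging the class at the cost of the constant and the sharp exponent. (The same bounds also justify the change of variables and the integrations by parts used above.)

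To finish, note that $V$ is uniformly convex, so $f\gamma$ obeys the Brascamp--Lieb inequality with constant $1/\eps$; applying it to each component of $\grad\zeta$ and summing,
\[
\int_{\R^n}\bigl|\grad\zeta-\overline{\grad\zeta}\bigr|^2 f\,d\gamma\le\frac1\eps\int_{\R^n}\|D^2\zeta\|^2 f\,d\gamma\le\frac{\delta(f)}{\eps\,c_0(\eps,M)},
\]
where $\overline{\grad\zeta}=\int_{\R^n}\grad\zeta\,f\,d\gamma=-\mu$ because $\grad\varphi$ sends the barycenter $\mu$ of $f\gamma$ to the barycenter $0$ of $\gamma$. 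Since $x\mapsto\grad\varphi(x)+\mu=\grad(\varphi(x)+\langle\mu,x\rangle)$ is the Brenier map from $f\gamma$ to the translate $\gamma_\mu$ of $\gamma$ by $\mu$,
\[
W_2(f\gamma,\gamma_\mu)^2=\int_{\R^n}|\grad\zeta+\mu|^2 f\,d\gamma\le\frac{\delta(f)}{\eps\,c_0(\eps,M)}.
\]
Translating both measures by $-\mu$ leaves $W_2$ invariant, carries $\gamma_\mu$ to $\gamma$ and $f\gamma$ to $\widetilde f\gamma$, and yields $W_2(\widetilde f\gamma,d\gamma)\le\bigl(\eps\,c_0(\eps,M)\bigr)^{-1/2}\delta(f)^{1/2}$, i.e.\ the theorem with $C(\eps,M)=\bigl(\eps\,c_0(\eps,M)\bigr)^{-1/2}$.
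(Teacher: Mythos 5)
Your argument is correct and rests on the same skeleton as the paper's proof: quantify Cordero-Erausquin's transport proof of log-Sobolev, use a Caffarelli-type contraction estimate to turn the log-determinant defect $\sum_i(\lambda_i-\log(1+\lambda_i))$ into an $L^2$ bound on $D^2\theta$, then conclude with a spectral-gap (Poincar\'e/Brascamp--Lieb) inequality for the uniformly log-concave measure $f\,d\gamma$, whose constant $1/\eps$ comes from the lower Hessian bound. You deviate in two genuine ways. First, you derive the two-sided bound $\sqrt{\eps}\,Id\le D^2\Phi\le\sqrt{M+1}\,Id$ by applying the contraction theorem both to the Brenier map and to its inverse; this needs the extra regularity fact $D^2\Phi^{*}(\grad\Phi(x))=(D^2\Phi(x))^{-1}$ (available here by Caffarelli's regularity theory since $\eps Id\le D^2V\le(M+1)Id$), whereas the paper uses only the upper bound (Kolesnikov's $L^\infty$ estimate) together with the trivial bound $\lambda_i\ge -1$ and the splitting $t-\log(1+t)\ge c\min\{t^2,|t|\}$, so the lower Hessian bound enters only through Poincar\'e --- slightly more economical, same constant behavior. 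Second, you handle a nonzero barycenter in one stroke, noting $\int\grad\theta\,f\,d\gamma=-\mu$ and that $\grad\Phi+\mu$ is the optimal map from $f\,d\gamma$ to the shifted Gaussian $\gamma_\mu$; the paper instead proves the centered case first and then verifies by direct computation that $\delta$, $Ent$, and $I$ transform correctly under the tilt-and-translate substitution $\hat f(x)=f(x+\mu)e^{-(\mu\cdot x+|\mu|^2/2)}$. Your route is cleaner and yields the same conclusion. One small slip at the very end: the translate of $f\,d\gamma$ by $-\mu$ has density $f(\cdot+\mu)e^{-(\langle\mu,\cdot\rangle+|\mu|^2/2)}$ with respect to $\gamma$ (exactly the $\hat f$ constructed in the paper's proof), not $f(\cdot)e^{-(\langle\mu,\cdot\rangle+|\mu|^2/2)}$ as you write; your substantive estimate $W_2(f\,d\gamma,\gamma_\mu)\le C\,\delta(f)^{1/2}$ is the correct content and coincides with what the paper proves, the discrepancy being inherited from the wording of the theorem statement itself rather than a gap in your argument.
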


Our theorem gives a quantitative way of measuring how far an admissible function is from attaining equality in the log-Sobolev inequality as measured with the Wasserstein metric. The proof of Theorem \ref{logsob} is achieved by showing its equivalence to the following corollary (see \S \ref{S3}):

\begin{cor} \label{logsob0}
There exists an explicit dimensionless constant $C=C(\epsilon, M)>0$ so that for all $f \in \mathcal{F}(\epsilon, M)$ with unit mass and zero barycenter, $$W_2(fd\gamma, d\gamma) \leq C \delta(f)^{\frac{1}{2}}.$$
\end{cor}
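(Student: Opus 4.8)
The plan is to prove Corollary \ref{logsob0} via the optimal transport / Monge--Amp\`ere route pioneered for the Gaussian log-Sobolev inequality (the ``Bakry--Ledoux through transport'' argument, in the spirit of Cordero-Erausquin's proof of the log-Sobolev and HWI inequalities). Let $f \in \mathcal{F}(\epsilon,M)$ have unit mass and zero barycenter, and write $d\mu = f\, d\gamma$. By Brenier's theorem there is a convex function $\varphi$ whose gradient $T = \nabla\varphi$ pushes $d\gamma$ forward to $d\mu$; since $f = e^{-h}$ with $-1+\epsilon \le D^2 h \le M$, the density is log-concave-perturbed-Gaussian, hence smooth and positive, and classical Caffarelli regularity gives that $\varphi$ is smooth with $T$ a diffeomorphism. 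Writing $T(x) = x + \nabla\psi(x)$ and using the Monge--Amp\`ere equation
\[
  e^{-|x|^2/2} = e^{-|T(x)|^2/2}\, f(T(x))\, \det D T(x),
\]
one takes logarithms, integrates against $d\gamma$, and—after an integration by parts that is legitimate because of the two-sided Hessian bounds controlling the growth of $T$—arrives at the now-standard identity/inequality expressing $\delta(f)$ as an integral of a nonnegative quantity built from $DT$. Concretely, the deficit dominates something like
\[
  \delta(f) \;\ge\; \int_{\mathbb{R}^n} \Big( \tfrac{1}{2}|\nabla\psi|^2 \;-\; \big[\log\det(I + D^2\psi) - \mathrm{tr}\, D^2\psi \big] \Big) \, d\gamma,
\]
and one uses $\log\det(I+S) \le \mathrm{tr}\, S - c\,|S|^2$ valid on the range of Hessians that occurs here (this is where $\epsilon$ and $M$ enter, bounding the eigenvalues of $D^2\psi$, equivalently of $DT$, away from $-1$ and $+\infty$) to get
\[
  \delta(f) \;\gtrsim_{\epsilon,M}\; \int_{\mathbb{R}^n} \Big( |\nabla\psi|^2 + |D^2\psi|^2 \Big)\, d\gamma .
\]

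The second step is to convert this $H^1$-type control of the transport potential into the Wasserstein bound. Since $T = \nabla\varphi$ is an optimal map from $d\gamma$ to $f\,d\gamma$, by definition
\[
  W_2^2(f\,d\gamma, d\gamma) \;\le\; \int_{\mathbb{R}^n} |T(x) - x|^2 \, d\gamma(x) \;=\; \int_{\mathbb{R}^n} |\nabla\psi(x)|^2 \, d\gamma(x),
\]
so in fact only the $\int |\nabla\psi|^2 d\gamma$ part of the lower bound on $\delta(f)$ is needed: $W_2^2(f\,d\gamma, d\gamma) \le \int |\nabla\psi|^2 d\gamma \le C(\epsilon,M)\,\delta(f)$, which is exactly the claimed inequality after taking square roots. (The extra $|D^2\psi|^2$ term in the deficit lower bound is what one would retain for the sharper/enlarged-space statements mentioned in the abstract, but it is not required for Corollary \ref{logsob0} itself.) The role of the unit mass and zero barycenter hypotheses is to make $T = x + \nabla\psi$ the ``correct'' normalization: zero barycenter forces $\int \nabla\psi\, d\gamma = 0$ (the barycenter of $f\,d\gamma$ equals $\int T\, d\gamma$), which is needed for the integration by parts to close without boundary/linear-growth error terms and for the constant to be dimension-free.

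The main obstacle—and the place demanding real care rather than bookkeeping—is the integration by parts and the justification that all integrals appearing are finite, carried out with constants depending only on $\epsilon$ and $M$ and not on $n$. One must show that $\nabla\psi$, $D^2\psi$, and the relevant logarithmic/trace terms are integrable against $d\gamma$; the upper Hessian bound $D^2 h \le M$ and lower bound $D^2 h \ge (-1+\epsilon)I$ translate, via Caffarelli's contraction theorem and its converse-type estimates, into two-sided bounds $\lambda(\epsilon,M) I \le DT \le \Lambda(\epsilon,M) I$, which give linear growth of $T$ and hence the needed integrability and the vanishing of boundary terms at infinity. The dimension-freeness then follows because the pointwise inequality $\log\det(I+S) \le \mathrm{tr}\, S - c(\epsilon,M)|S|^2$ is applied eigenvalue-by-eigenvalue with a constant depending only on the eigenvalue range, and the Gaussian integration by parts identity $\int \Div V\, d\gamma = \int \langle V, x\rangle\, d\gamma$ is dimension-free. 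A secondary technical point is approximation: a priori $f\,d\gamma$ only has the Hessian bounds, so one may first prove the estimate for smooth, compactly-supported-in-log perturbations and then pass to the limit using lower semicontinuity of $\delta$ and of $W_2$ under the relevant convergence.
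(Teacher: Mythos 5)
Your proposal has a genuine gap at its central step: the claimed bound
\[
\delta(f)\ \ge\ \int_{\mathbb{R}^n}\Big(\tfrac12|\nabla\psi|^2+\big[\operatorname{tr}D^2\psi-\log\det(I+D^2\psi)\big]\Big)\,d\gamma
\]
is not what the transport computation gives, and it is false. With your direction of transport ($T=\nabla\varphi$ pushing $\gamma$ forward to $f\,d\gamma$), taking logarithms in the Monge--Amp\`ere equation, integrating against $d\gamma$, and using Gaussian integration by parts yields the \emph{identity}
\[
Ent(f)=\int_{\mathbb{R}^n}\Big(\tfrac12|\nabla\psi|^2+\Delta\psi-\log\det(I+D^2\psi)\Big)\,d\gamma,
\]
i.e.\ the right-hand side of your display is the entropy, not a lower bound for the deficit; this computation is Cordero-Erausquin's proof of Talagrand's inequality $\tfrac12W_2^2\le Ent(f)$, and your display would assert $\delta(f)\ge Ent(f)$, which fails already for centered Gaussians of variance larger than $1$. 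With the paper's direction (transporting $f\,d\gamma$ to $d\gamma$, $T=x+\nabla\theta$), the deficit satisfies
\[
\delta(f)\ \ge\ \frac12\int_{\mathbb{R}^n}\big|\nabla\theta+\nabla\log f\big|^2 f\,d\gamma+\int_{\mathbb{R}^n}\big(\Delta\theta-\log\det(Id+D^2\theta)\big)f\,d\gamma,
\]
so the first-order quantity controlled by $\delta$ involves the displacement \emph{plus the score} $\nabla\log f$, not $|\nabla\theta|^2$ alone. In neither direction does the deficit directly dominate $\int|T(x)-x|^2$; if it did, one would get unconditional dimension-free $W_2$-stability for all smooth centered densities (your use of $\epsilon,M$ is only for regularity/integrability), which is precisely the open problem the paper is not claiming to solve and is inconsistent with the constant degenerating as $\epsilon\to0$ or $M\to\infty$.

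The missing mechanism is the passage from the second-order (Hessian) term to the displacement, and this is where all the hypotheses of the corollary actually work. The paper writes $\delta(f)\gtrsim\int\sum_i g(\lambda_i)\,f\,d\gamma$ with $g(t)=t-\log(1+t)$ and $\lambda_i$ the eigenvalues of $D^2\theta$; since $g$ grows only linearly at infinity, the upper bound $D^2h\le M$ is used through Kolesnikov's extension of Caffarelli's contraction theorem to get $\|\lambda_i\|_\infty\le C_M$, hence $\delta(f)\ \gtrsim_{M}\ \int\|DT-Id\|_{HS}^2\,f\,d\gamma$ (note your pointwise inequality $\log\det(I+S)\le\operatorname{tr}S-c|S|^2$ likewise needs this upper eigenvalue bound, so $M$ must enter quantitatively, not merely qualitatively). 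Then, since $f\,d\gamma=e^{-(h+|x|^2/2)}dx$ is uniformly log-concave with constant $\epsilon$ (this is where $D^2h\ge(-1+\epsilon)Id$ enters), a Poincar\'e inequality for $f\,d\gamma$ applied to the components $T^i(x)-x_i$, which have zero mean exactly because of the zero-barycenter hypothesis, converts the Hessian control into $\int|T(x)-x|^2f\,d\gamma=W_2^2(f\,d\gamma,d\gamma)$. None of these three ingredients (eigenvalue $L^\infty$ bound via Caffarelli/Kolesnikov, quadratic lower bound on $g$, Poincar\'e for the log-concave measure $f\,d\gamma$) appears in your outline, and without them the argument does not close.
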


Although $\mathcal{F}(\epsilon, M)$ has a special structure, our results could be seen as a first step towards a sharp, general, dimension-free stability result for the Gaussian log-Sobolev inequality. In fact, by modifying our proof of Theorem \ref{logsob}, the class of admissible functions $\mathcal{F}(\epsilon, M)$ may be enlarged at the expense of the dimensionless constant and the sharp exponent. More specifically, thanks to a recent result of Kolesnikov \cite{Kol}, one may replace the upper $L^\infty$ assumption on the Hessian of the logarithm of admissible functions with an $L^r$ estimate.

\begin{thm} \label{logsob2}
If $\delta \leq 1$ and $r >1$, then there exist explicit constants $C=C(\epsilon, M, n)>0$ and $\beta=\beta(r)>0$ so that for all
\begin{align*}
f \in \mathcal{\tilde F}(\epsilon, M, r):=\Biggl\{e^{-h}: &(-1+\epsilon)\leq D^2 h, \int_{{\mathbb{R}^n}} ||(D^2 h+Id)_+||^{r}fd\gamma \leq M \Biggr\}
\end{align*}
with unit mass and zero barycenter, $$W_2(f d\gamma, d\gamma) \leq C \delta(f)^{\beta}.$$ Moreover, one may take $\beta=\frac{r-1}{2(2r-1)}$.
\end{thm}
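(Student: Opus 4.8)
The plan is to rerun the optimal-transport argument behind Corollary~\ref{logsob0}, and to quantify the single step in which the two-sided Hessian bound of $\mathcal F(\epsilon,M)$ is used, replacing the $L^\infty$ control there by a truncation fed by the $L^r$ hypothesis and by Kolesnikov's \cite{Kol} a priori integrability estimate for the Brenier map. Write $\nu:=f\,d\gamma$; since $f$ has unit mass and zero barycenter this is a probability measure centered at the origin, and the hypothesis $D^2h\ge(-1+\epsilon)\,\mathrm{Id}$ says exactly that $\nu=(2\pi)^{-n/2}e^{-V}\,dx$ with $V:=h+|\cdot|^2/2$ being $\epsilon$-uniformly convex, so $\nu$ is strongly log-concave and the Brenier map $T=\nabla\varphi$ with $T_\#\gamma=\nu$ is well defined. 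First I would record the Alexandrov Monge--Amp\`ere identity $-h(T(x))=\tfrac12\big(|T(x)|^2-|x|^2\big)-\log\det D^2\varphi(x)$, integrate it against $d\gamma$, and combine it with the trace inequality $\log\det D^2\varphi\le\Delta\varphi-n$ and the Gaussian integration by parts $\int\Delta\varphi\,d\gamma\le\int\langle T,x\rangle\,d\gamma$ (the inequality harmlessly absorbing the singular part of the distributional Laplacian) together with the differentiated Monge--Amp\`ere equation to recover, as in the proof of Corollary~\ref{logsob0}, a deficit lower bound of the (localizable) shape
\[
\delta(f)\ \ge\ c\int_{\mathbb R^n}\frac{|T(x)-x|^2}{1+\big\|(D^2h+\mathrm{Id})_+\big\|\big(T(x)\big)}\,d\gamma(x).
\]
In the regime of Corollary~\ref{logsob0} the weight is $\ge(1+M)^{-1}$ and one concludes with $\beta=\tfrac12$; under only an $L^r$ bound the weight can be small on a small set, which is the sole place the argument must change.

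Next I would fix a threshold $\lambda\ge 1$ and split via the good set $G_\lambda:=\{x:\|(D^2h+\mathrm{Id})_+\|(T(x))\le\lambda\}$. On $G_\lambda$ the weight is $\ge(1+\lambda)^{-1}$, so $\int_{G_\lambda}|T-x|^2\,d\gamma\le(1+\lambda)\,\delta(f)/c$. On the complement, pushing forward by $T$ and applying Chebyshev with the $L^r$ hypothesis gives
\[
\gamma\big(\mathbb R^n\setminus G_\lambda\big)=\nu\big(\{\,\|(D^2h+\mathrm{Id})_+\|>\lambda\,\}\big)\le\lambda^{-r}\int_{\mathbb R^n}\big\|(D^2h+\mathrm{Id})_+\big\|^{r}f\,d\gamma\le M\lambda^{-r},
\]
while Kolesnikov's theorem provides an a priori bound $\big\|\,|T(\cdot)-\cdot|\,\big\|_{L^{q}(d\gamma)}\le K=K(\epsilon,M,n,r)$ for a suitable exponent $q>2$ (equivalently, a Sobolev bound on $\varphi$); this is where \cite{Kol} enters and the reason the constant now depends on $n$. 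H\"older's inequality then yields $\int_{\mathbb R^n\setminus G_\lambda}|T-x|^2\,d\gamma\le K^2(M\lambda^{-r})^{1-2/q}$, and since $T$ is optimal, $W_2(\nu,\gamma)^2=\int|T-x|^2\,d\gamma$, so combining the two pieces
\[
W_2(\nu,\gamma)^2\ \le\ C_1\,\lambda\,\delta(f)\ +\ C_2\,\lambda^{-\kappa},\qquad \kappa:=r\Big(1-\tfrac2q\Big),
\]
with $C_1,C_2$ depending only on $\epsilon,M,n,r$.

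Finally I would optimize in $\lambda$: taking $\lambda\sim\delta(f)^{-1/(1+\kappa)}$ (which is $\ge1$ because $\delta\le1$) balances the two terms and gives $W_2(\nu,\gamma)^2\le C\,\delta(f)^{\kappa/(1+\kappa)}$, hence $W_2(\nu,\gamma)\le C^{1/2}\delta(f)^{\kappa/(2+2\kappa)}$; with the value of $q$ from Kolesnikov's estimate the bookkeeping gives $\kappa=1-\tfrac1r$, so that $\kappa/(2+2\kappa)=\tfrac{r-1}{2(2r-1)}=\beta$. I expect the main obstacle to be the second paragraph: one must verify that Kolesnikov's regularity result applies in the present normalization and delivers exactly the integrability exponent needed for the H\"older split to produce this $\kappa$, and, more technically, justify the Alexandrov Monge--Amp\`ere identity, the one-sided integration by parts, and the pushforward manipulations for a convex potential $\varphi$ whose Hessian is merely $L^{q/2}(d\gamma)$ rather than bounded.
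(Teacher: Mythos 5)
There is a genuine gap at the heart of your argument: the ``localizable'' lower bound $\delta(f)\ \ge\ c\int |T(x)-x|^2\,\bigl(1+\|(D^2h+\mathrm{Id})_+\|(T(x))\bigr)^{-1}d\gamma(x)$ is asserted but not proved, and it cannot follow from the steps you list (Monge--Amp\`ere identity, $\log\det\le$ trace, integration by parts). Those manipulations only control the second-order quantity $\sum_i\bigl(\lambda_i-\log(1+\lambda_i)\bigr)$, where the $\lambda_i$ are eigenvalues of $D^2\theta$ -- this is exactly what Remark \ref{cordstab} gives -- and they are completely insensitive to the barycenter. But any inequality bounding the displacement $|T-x|^2$ by the deficit must use the zero-barycenter hypothesis: for a log-linear $f(x)=e^{a\cdot x-|a|^2/2}$ one has $\delta(f)=0$ while $T(x)=x+a$ and $\|(D^2h+\mathrm{Id})_+\|\equiv 1$, so your displayed inequality is false, and since the local steps you invoke apply verbatim to this example, they cannot yield it even under the barycenter restriction. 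Passing from Hessian control to displacement control is precisely the role of the Poincar\'e inequality (Corollary \ref{cor}) for the $\epsilon$-uniformly log-concave measure $f\,d\gamma$, applied to $T^i(x)-x_i$, whose mean vanishes because of the zero barycenter; your scheme skips this step, and without it the truncation/optimization in $\lambda$ has nothing to act on. A secondary problem: the results of \cite{Kol} quoted in the paper (Theorems \ref{Kol} and \ref{Kol2}) give integrability of $D^2\Phi$, not an $L^q(d\gamma)$ bound on the displacement $|T(\cdot)-\cdot|$ with the specific $q=2r^2/(r^2-r+1)$ your bookkeeping requires; displacement moments can be obtained by other means (sub-Gaussian concentration of both measures), but then the exponent produced by your optimization would not be tied to $r$ in the way you claim, so even granting the weighted bound the stated $\beta$ is not justified.

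For comparison, the paper's proof keeps the deficit as a bound on the eigenvalue functional, minorizes $t-\log(1+t)$ by a convex $\phi$ and uses Jensen plus $\delta\le 1$ to get $\delta(f)\gtrsim\sum_i\bigl(\int|\lambda_i|f\,d\gamma\bigr)^2$, applies Kolesnikov's Theorem \ref{Kol2} (transport from $f\,d\gamma$ to $d\gamma$, with $V=h+|x|^2/2$) to get $\int\|D^2\Phi\|^{2r}f\,d\gamma\le M$, interpolates $L^2$ between $L^1$ and $L^{2r}$ with $\theta=\frac{r-1}{2r-1}$, sums over the $n$ eigenvalues via H\"older (the sole source of the dimensional constant), and only then converts $\int\|DT-\mathrm{Id}\|_{HS}^2\,d\mu_f$ into $W_2^2$ by the Poincar\'e inequality; the exponent $\beta=\theta/2=\frac{r-1}{2(2r-1)}$ comes from the interpolation, not from a threshold optimization. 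If you want to salvage your truncation idea, you would have to insert the Poincar\'e step first and then truncate at the level of the Hessian eigenvalues, at which point you are essentially reproducing the paper's interpolation argument.
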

\noindent We remark that one may remove the unit mass and zero barycenter assumptions in Theorem \ref{logsob2} and prove an analogous result as in Theorem \ref{logsob}.

The paper is organized as follows: in \S \ref{S2}, we collect some preliminary results from the literature which will be used in our proofs. In \S \ref{S3} we prove Theorems \ref{logsob} and \ref{logsob2} and show that the $\frac{1}{2}$ exponent is sharp. Last, in \S \ref{S4} we show how to obtain bounds on the entropy in terms of the deficit (see Corollary \ref{bet1}) and derive an improved log-Sobolev inequality for our function class (see Remark \ref{bet2}).

\section{Preliminaries}
\label{S2}

Our proof of Theorem \ref{logsob} exploits Cordero-Erausquin's optimal mass transfer proof of the log-Sobolev inequality \cite{Co}. For the reader's convenience and to simplify the presentation of our proof, we include his proof in this section along with statements of other results from the literature which will be useful for our purpose. We recall that given a smooth, positive function $f:\R^n \to \R$ normalized to have unit mass with respect to the Gaussian measure $d\gamma$, Brenier's theorem yields the existence of a convex function $\phi:\R^n \to \R$ such that its gradient $\grad \phi$ is the optimal transport map between $fd\gamma$ and $d\gamma$: i.e.$$\grad \phi_{\#} (fd\gamma) = d\gamma$$ and $$\int_{\R^n} |x-\grad \phi(x)|^2 d\mu(x) = \inf_{T_{\#} (fd\gamma) = d\gamma}  \int_{\R^n} |x-T(x)|^2 d\mu(x).$$ Moreover, $\phi$ satisfies $fd\gamma$ -- a.e. the Monge-Amp\`ere equation $$f(x)e^{-|x|^2/2}=\det(D^2 \phi)e^{-|\nabla \phi (x)|^2/2}.$$ For appropriate definitions from transport theory, we refer the reader to \cite{Vi2} (see also the introduction in \cite{Co} for a short and clear overview).

\begin{thm} \label{C} (log-Sobolev) Let $f$ be a smooth, positive function on $\mathbb{R}^n$ normalized to have unit mass with respect to the Gaussian measure $d\gamma$. Then,
$$\int_{\mathbb{R}^n} f(x) \log f(x) d\gamma (x)- ||f||_{L^1(d\gamma)} \log\big (||f||_{L^1(d\gamma)}\big)  \leq \frac{1}{2} \int_{\mathbb{R}^n} \frac{|\nabla f (x)|^2}{f(x)}  \ d\gamma (x).$$
\end{thm}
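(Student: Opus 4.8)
The plan is to reproduce Cordero-Erausquin's optimal transport proof. Since $f$ is normalized to unit mass, the correction term $||f||_{L^1(d\gamma)}\log\big(||f||_{L^1(d\gamma)}\big)$ vanishes, so it suffices to show $Ent(f)=\int_{\R^n} f\log f\, d\gamma \leq \frac12 I(f)$. First I would invoke Brenier's theorem to produce the convex potential $\phi$ with $\nabla\phi_{\#}(f\,d\gamma)=d\gamma$, together with the Monge-Amp\`ere equation
\[
f(x)e^{-|x|^2/2}=\det\big(D^2\phi(x)\big)\,e^{-|\nabla\phi(x)|^2/2},
\]
valid $f\,d\gamma$--a.e., where $D^2\phi$ denotes the Alexandrov (pointwise a.e.) Hessian, which exists by McCann's regularity theory.

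Next I would set $\theta:=\phi-|x|^2/2$, so that $\nabla\phi(x)=x+\nabla\theta(x)$ and $D^2\phi=\mathrm{Id}+D^2\theta\geq 0$ a.e.\ by convexity of $\phi$. Taking logarithms in the Monge-Amp\`ere equation and simplifying yields, pointwise a.e.,
\[
\log f(x)=-x\cdot\nabla\theta(x)-\frac{|\nabla\theta(x)|^2}{2}+\log\det\big(\mathrm{Id}+D^2\theta(x)\big).
\]
Applying the elementary inequality $\log\det(\mathrm{Id}+A)\leq\operatorname{tr}A$ for symmetric $A$ with $\mathrm{Id}+A\geq 0$ (i.e.\ $\log(1+t)\leq t$ for $t\geq -1$, summed over eigenvalues) and integrating against $f\,d\gamma$ gives
\[
Ent(f)\leq\int_{\R^n}\Big(-x\cdot\nabla\theta-\frac{|\nabla\theta|^2}{2}+\Delta\theta\Big)f\,d\gamma.
\]

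Then I would integrate by parts against the Gaussian weight, using $\int_{\R^n}\Delta\theta\,f\,d\gamma=-\int_{\R^n}\nabla\theta\cdot\nabla f\,d\gamma+\int_{\R^n}(x\cdot\nabla\theta)f\,d\gamma$; the drift term cancels the $-x\cdot\nabla\theta$ contribution, leaving $Ent(f)\leq-\int_{\R^n}\nabla\theta\cdot\nabla f\,d\gamma-\frac12\int_{\R^n}|\nabla\theta|^2 f\,d\gamma$. Finally, Young's inequality in the form $-\nabla\theta\cdot\nabla f=-(\sqrt f\,\nabla\theta)\cdot(\nabla f/\sqrt f)\leq\frac12|\nabla\theta|^2 f+\frac12|\nabla f|^2/f$ absorbs the remaining $|\nabla\theta|^2$ term and produces exactly $\frac12 I(f)$, completing the argument.

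The main obstacle is rigor rather than computation: the Brenier potential is only twice differentiable in the Alexandrov sense, so the $\Delta\theta$ above must be interpreted as the trace of the Alexandrov Hessian, whereas the integration by parts naturally involves the distributional Laplacian; these differ by a nonnegative singular measure, but that discrepancy has the favorable sign and so preserves the inequality. One also needs sufficient decay and integrability to discard boundary terms in the integration by parts, which I would handle by first establishing the inequality for a convenient dense class (for instance $f$ smooth, bounded, and bounded away from zero, or compactly supported perturbations of constants) and then passing to the limit. I would cite McCann's differentiability results for convex functions and the standard approximation scheme for these points rather than reproving them here.
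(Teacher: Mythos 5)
Your argument is exactly the paper's proof, which reproduces Cordero-Erausquin's mass-transport argument: Brenier map, Monge--Amp\`ere equation, the bound $\log\det(\mathrm{Id}+D^2\theta)\leq\Delta\theta$ via $\log(1+t)\leq t$, Gaussian integration by parts, and completing the square to absorb the cross term into $\frac{1}{2}I(f)$. Your extra care about the Alexandrov versus distributional Hessian (with the singular part having the favorable sign) is correct and only adds rigor to the same approach, so there is nothing to flag.
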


\begin{proof}[Proof as given by Cordero-Erausquin \cite{Co}]
Without loss of generality, assume $||f||_{L^1(d\gamma)}=1$. Let $\nabla \Phi$ be the Brenier map between $f d\gamma$ and $\gamma$, and set $\theta(x):=\Phi(x)-\frac{1}{2} |x|^2$ so that $\nabla \Phi(x)=x+\nabla \theta(x)$. It follows that $Id+D^2 \theta \geq 0,$ where $Id$ is the identity matrix. The Monge-Amp\`{e}re equation reads:
$$ f(x)e^{-|x|^2/2}=\det(Id+D^2 \theta)e^{-|x+\nabla \theta (x)|^2/2},$$ $fd\gamma$ -- a.e.
Taking the logarithm of both sides, the above equation may be rewritten as:
\begin{align}
\log f(x) &= -\frac{1}{2}|x+\nabla \theta(x)|^2+\frac{1}{2}|x|^2+\log \det(Id+D^2 \theta) \nonumber \\
&=-x\cdot \nabla \theta(x) - \frac{1}{2}|\nabla \theta(x)|^2+\log \det(Id+D^2 \theta) \nonumber \\
&\leq -x \cdot \nabla \theta(x)-\frac{1}{2}|\nabla \theta(x)|^2+ \Lap \theta(x), \label{corderopf}
\end{align}
where the last inequality follows from the fact that $\log(1+t) \leq t$, for $t\geq-1$ (here, $\log$ is the natural logarithm). Integrating with respect to $fd\gamma$ and using integration by parts, it follows that
\begin{align*}
\int_{{\R}^n} f \log f d\gamma &\leq \int_{{\R}^n} f[\Lap \theta-x\cdot \nabla \theta]d\gamma-\int_{{\R}^n} \frac{1}{2}|\nabla \theta|^2 f d\gamma\\
&= -\int_{{\R}^n} \nabla \theta \cdot \nabla f d \gamma-\int_{{\R}^n} \frac{1}{2}|\nabla \theta|^2 f d\gamma \\
&=-\int_{{\R}^n} \frac{1}{2} \Bigg | \sqrt{f} \nabla \theta(x)+\frac{\nabla f(x)}{\sqrt{f}} \Bigg|^2 d\gamma(x)+ \frac{1}{2} \int_{{\R}^n} \frac{|\nabla f|^2}{f} d \gamma\\
&\leq \frac{1}{2} \int_{{\R}^n} \frac{|\nabla f|^2}{f} d \gamma.
\end{align*}
 \end{proof}

 \begin{rem} \label{cordstab}
Note that by the proof of Theorem \ref{C} (more specifically, from (\ref{corderopf})), if $||f||_{L^1(d\gamma)}=1$, then
$$
\delta(f)  \geq \int_{{\R}^n} f \bigg( \Lap \theta - \log \det (Id + D^2 \theta ) \bigg) \ d\gamma.
$$

 \end{rem}

\noindent Next, we state the following two theorems of Kolesnikov \cite[Theorems 6.1 \& 7.4 ]{Kol}. The first generalizes Caffarelli's contraction theorem \cite{Caffcont}:

\begin{thm} \label{Kol}
Let $\mu=e^{-V}dx$ and $\nu=e^{-W}$ be probability measures on $\mathbb{R}^d$ and let $T=\nabla \Phi$ be the corresponding optimal transport map. If $D^2 W\geq KId$, then for any $1\leq r \leq \infty$, $$K||\Phi_{ee}^2||_{L^r(\mu)} \leq ||(V_{ee})_{+}||_{L^r(\mu)}.$$
\end{thm}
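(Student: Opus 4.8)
The plan is to derive an a priori pointwise differential inequality for the second derivatives of $\Phi$ from the Monge--Amp\`ere equation and then integrate it against $\mu$, which is essentially the strategy of Kolesnikov, refining Caffarelli's maximum-principle proof \cite{Caffcont}. Writing the change-of-variables equation for $(\grad\Phi)_{\#}\mu=\nu$ in logarithmic form gives, $\mu$--a.e.,
$$V(x)=W(\grad\Phi(x))-\log\det D^2\Phi(x),$$
with $D^2\Phi\geq 0$ since $\Phi$ is convex. Fix a unit vector $e$, write $\Phi_e:=\fr_e\Phi$, $u:=\Phi_{ee}=\langle D^2\Phi\,e,e\rangle\geq 0$, and note $\fr_e D^2\Phi=D^2\Phi_e$ and $\fr_e[(D^2\Phi)^{-1}]=-(D^2\Phi)^{-1}(D^2\Phi_e)(D^2\Phi)^{-1}$. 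Differentiating the displayed identity twice in the direction $e$ yields
$$\operatorname{tr}\big((D^2\Phi)^{-1}D^2u\big)-\langle\grad W(\grad\Phi),\grad u\rangle=\langle D^2W(\grad\Phi)\grad\Phi_e,\grad\Phi_e\rangle+\big\|(D^2\Phi)^{-1/2}(D^2\Phi_e)(D^2\Phi)^{-1/2}\big\|_{HS}^2-V_{ee}.$$
Set $Lg:=\operatorname{tr}((D^2\Phi)^{-1}D^2g)-\langle\grad W(\grad\Phi),\grad g\rangle$. Discarding the nonnegative Hilbert--Schmidt term, using $D^2W\geq K\,Id$, and using $|\grad\Phi_e|^2\geq\langle\grad\Phi_e,e\rangle^2=u^2$, this becomes the differential inequality
$$Lu\geq K\,u^2-(V_{ee})_+.$$

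The second ingredient is that $L$ is symmetric and elliptic with respect to $\mu$. Indeed, the cofactor matrix $\operatorname{cof}(D^2\Phi)=(\det D^2\Phi)(D^2\Phi)^{-1}$ is divergence-free (Piola's identity), and $\det D^2\Phi=e^{W(\grad\Phi)-V}$, so a single integration by parts gives, for sufficiently decaying functions,
$$\int_{\R^d}(Lg)\,\psi\,d\mu=-\int_{\R^d}\big\langle(D^2\Phi)^{-1}\grad g,\grad\psi\big\rangle\,d\mu.$$
For $r=\infty$ this is just the maximum principle: at a point where $u$ is maximal, $\grad u=0$ and $D^2u\leq 0$, hence $Lu\leq 0$, so $K\,u^2\leq(V_{ee})_+\leq\norm{(V_{ee})_+}_{L^\infty(\mu)}$ there, giving $K\norm{u^2}_{L^\infty(\mu)}\leq\norm{(V_{ee})_+}_{L^\infty(\mu)}$. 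For $1<r<\infty$, multiply $Lu\geq K\,u^2-(V_{ee})_+$ by $u^{2r-2}$ and integrate $d\mu$: by the identity above the left-hand side equals $-(2r-2)\int u^{2r-3}\langle(D^2\Phi)^{-1}\grad u,\grad u\rangle\,d\mu\leq 0$, so
$$K\int_{\R^d}u^{2r}\,d\mu\leq\int_{\R^d}(V_{ee})_+\,u^{2r-2}\,d\mu\leq\norm{(V_{ee})_+}_{L^r(\mu)}\Big(\int_{\R^d}u^{2r}\,d\mu\Big)^{\frac{r-1}{r}}$$
by H\"older, which rearranges to $K\norm{u^2}_{L^r(\mu)}=K\big(\int u^{2r}d\mu\big)^{1/r}\leq\norm{(V_{ee})_+}_{L^r(\mu)}$; the case $r=1$ follows by letting $r\downarrow 1$ (or directly, since then the test function is $1$ and $\int Lu\,d\mu=0$). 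Since $u^2=\Phi_{ee}^2$, this is the asserted inequality for the fixed direction $e$.

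The genuine difficulty is not this formal computation but its rigorous justification. A priori $\Phi$ is only $C^{1,\alpha}_{\loc}$ / $W^{2,p}_{\loc}$, $D^2\Phi$ may degenerate or grow at infinity, and $\R^d$ is unbounded, so each of the three steps above --- differentiating the Monge--Amp\`ere equation, applying the maximum principle (the supremum of $u$ need not be attained), and, above all, discarding the boundary terms in the integration by parts --- must be carried out through an approximation scheme: truncate $V$ and $W$, solve the second boundary value problem on large balls, invoke Caffarelli's interior regularity together with the accompanying a priori estimates, and pass to the limit while monitoring the weighted quantities $\int u^{2r}\,d\mu$ and the Dirichlet-type terms so that uniformity is preserved. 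This is the technical content of \cite{Kol}, and I would follow it; the only place where care beyond bookkeeping is needed is ensuring the $L^r$ estimate survives the limit, for which one argues on balls first and then lets the radius tend to infinity.
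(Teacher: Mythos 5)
The paper does not prove this theorem at all: it is imported verbatim from Kolesnikov \cite{Kol} (Theorem 6.1 there), so there is no internal proof to compare against. Your sketch correctly reconstructs the Caffarelli--Kolesnikov argument behind that citation --- the twice-differentiated Monge--Amp\`ere equation giving $Lu\geq Ku^2-(V_{ee})_+$ for $u=\Phi_{ee}$, the divergence-form/symmetry identity for $L$ with respect to $\mu$ via the cofactor (Piola) identity, the maximum principle for $r=\infty$ and the $u^{2r-2}$ test-function plus H\"older argument for finite $r$ --- and you are right, and appropriately candid, that the remaining content is the approximation/regularity scheme justifying these formal steps, which is exactly what \cite{Kol} supplies.
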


\begin{thm} \label{Kol2}
Let $\mu=e^{-V}dx$ and $\nu=e^{-W}$ be probability measures on $\mathbb{R}^d$ and let $T=\nabla \Phi$ be the corresponding optimal transport map. If $D^2 W\geq KId$, then for any $r\geq1$,

$$K \Big(\int_{{R}^n} ||D^2 \Phi||^{2r} d\mu \Big)^{\frac{1}{r}} \leq \Big(\int_{{R}^n} ||(D^2V)_+||^rd\mu\Big)^{\frac{1}{r}},$$ where $||\cdot||$ denotes the standard induced matrix norm.
\end{thm}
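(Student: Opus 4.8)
\medskip
\noindent\textbf{Proof plan for Theorem \ref{logsob2}.}
The plan is to run Cordero-Erausquin's transport proof of the log-Sobolev inequality (Theorem \ref{C}) exactly as one does behind Corollary \ref{logsob0}, the only difference being that the two-sided Hessian bound used there is replaced by Kolesnikov's higher-integrability estimate (Theorem \ref{Kol2}), at the price of a worse exponent. Since $f=e^{-h}$ has unit mass and zero barycenter, let $\grad\Phi$ be the Brenier map pushing $fd\gamma$ onto $d\gamma$ and set $\theta(x):=\Phi(x)-\tfrac12|x|^2$, so that $\grad\Phi=x+\grad\theta$ and $Id+D^2\theta\geq0$. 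By Remark \ref{cordstab},
\begin{equation*}
\delta(f)\;\geq\;H:=\int_{\R^n} f\bigl(\Lap\theta-\log\det(Id+D^2\theta)\bigr)\,d\gamma.
\end{equation*}

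The first step is to extract from $H$ a quantitative, but \emph{degenerate}, bound on $D^2\theta$. Applying the elementary inequality $t-\log(1+t)\geq\tfrac{t^2}{2(1+|t|)}$, valid for all $t>-1$, to the eigenvalues $\lambda_1,\dots,\lambda_n\geq-1$ of $D^2\theta$ yields, $fd\gamma$-a.e.,
\begin{equation*}
\Lap\theta-\log\det(Id+D^2\theta)=\sum_i\bigl(\lambda_i-\log(1+\lambda_i)\bigr)\;\geq\;\frac{\sum_i\lambda_i^2}{2\bigl(1+\max_i|\lambda_i|\bigr)}\;\geq\;\frac{\|D^2\theta\|^2}{2\bigl(1+\|D^2\theta\|\bigr)},
\end{equation*}
so that $\int_{\R^n}\tfrac{\|D^2\theta\|^2}{1+\|D^2\theta\|}\,fd\gamma\leq 2\delta(f)$. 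In the $L^\infty$ regime of Corollary \ref{logsob0}, $\|D^2\theta\|$ is bounded by a constant depending only on $\epsilon,M$ — via Theorem \ref{Kol} with $r=\infty$ together with $D^2h\leq M$ — so this already gives $\int\|D^2\theta\|^2fd\gamma\lesssim\delta(f)$ and the sharp exponent $\tfrac12$; here the denominator genuinely degenerates, and to compensate I would recover the missing bound as an $L^{2r}(fd\gamma)$ estimate via Theorem \ref{Kol2}. Writing $fd\gamma=e^{-V}dx$ with $D^2V=D^2h+Id\geq\epsilon\,Id$ and $d\gamma=e^{-W}dx$ with $D^2W=Id$, so that $K=1$ and $\grad\Phi$ is exactly the optimal map from $e^{-V}dx$ to $e^{-W}dx$, Theorem \ref{Kol2} gives $\bigl(\int\|D^2\Phi\|^{2r}fd\gamma\bigr)^{1/r}\leq\bigl(\int\|(D^2h+Id)_+\|^rfd\gamma\bigr)^{1/r}\leq M^{1/r}$, i.e.\ $\int\|D^2\Phi\|^{2r}fd\gamma\leq M$, whence $\|D^2\theta\|_{L^{2r}(fd\gamma)}\leq M^{1/(2r)}+c(n)=:M'$ (this is the only place the dimension enters the constant).

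With the degenerate weighted estimate and the $L^{2r}$ estimate in hand, I would interpolate. For a threshold $\Lambda\geq1$, split $\int\|D^2\theta\|^2fd\gamma$ over $\{\|D^2\theta\|\leq\Lambda\}$ and $\{\|D^2\theta\|>\Lambda\}$: on the first set $\|D^2\theta\|^2\leq(1+\Lambda)\tfrac{\|D^2\theta\|^2}{1+\|D^2\theta\|}$, so its contribution is at most $2(1+\Lambda)\delta(f)$; on the second, Chebyshev's inequality gives $fd\gamma\bigl(\{\|D^2\theta\|>\Lambda\}\bigr)\leq(M')^{2r}\Lambda^{-2r}$ and H\"older with exponents $r$ and $\tfrac{r}{r-1}$ bounds the contribution by $(M')^{2r}\Lambda^{-2(r-1)}$. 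Choosing $\Lambda$ to balance the two terms, $\Lambda^{2r-1}\sim(M')^{2r}/\delta(f)$, leads to
\begin{equation*}
\int_{\R^n}\|D^2\theta\|^2\,fd\gamma\;\leq\;C(\epsilon,M,n,r)\,\delta(f)^{\frac{2(r-1)}{2r-1}},
\end{equation*}
and since $\tfrac{2(r-1)}{2r-1}\geq\tfrac{r-1}{2r-1}=2\beta$ while $\delta(f)\leq1$, the right-hand side is bounded by $C\,\delta(f)^{2\beta}$ with $\beta=\tfrac{r-1}{2(2r-1)}$. Finally, to pass from $D^2\theta$ to $W_2$, note $W_2(fd\gamma,d\gamma)^2=\int|x-\grad\Phi(x)|^2fd\gamma=\int|\grad\theta|^2fd\gamma$, and that since $D^2V\geq\epsilon\,Id$ the measure $fd\gamma$ satisfies the Poincar\'e inequality with constant $1/\epsilon$ (Bakry--\'Emery); applying it componentwise to $\grad\theta$ and using $\int\grad\theta\,fd\gamma=\int\grad\Phi\,fd\gamma-\int x\,fd\gamma=0$ — the first integral being the mean of $d\gamma$ since $\grad\Phi_{\#}(fd\gamma)=d\gamma$, the second being the barycenter of $f$ — gives $\int|\grad\theta|^2fd\gamma\leq\tfrac1\epsilon\int\|D^2\theta\|_{\mathrm{HS}}^2fd\gamma\leq C\,\delta(f)^{2\beta}$, and Theorem \ref{logsob2} follows on taking square roots.

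The genuinely new point, relative to Corollary \ref{logsob0}, is that Cordero-Erausquin's deficit controls only the \emph{degenerate} quantity $\int\tfrac{\|D^2\theta\|^2}{1+\|D^2\theta\|}\,fd\gamma$, which behaves like $\int\|D^2\theta\|\,fd\gamma$ where the Hessian is large; Kolesnikov's estimate is exactly what is needed to control that tail, and the interpolation between the two is where the non-sharp exponent $\beta(r)$ is produced. The remaining difficulties are technical and routine: justifying Theorem \ref{C}, Remark \ref{cordstab}, and Theorem \ref{Kol2} under the sole hypothesis $f\in\mathcal{\tilde F}(\epsilon,M,r)$ — via approximation of $f$ by smooth, rapidly decaying densities together with the regularity theory for $\Phi$ — and checking that $I(f)$, $Ent(f)$, and the relevant second moments are finite, all of which follow from the semiconvexity $D^2h\geq(-1+\epsilon)Id$, which forces $fd\gamma$ to have Gaussian-type tails.
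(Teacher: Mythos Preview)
Your plan is correct and takes a genuinely different route from the paper's proof of Theorem~\ref{logsob2}. The paper never uses the pointwise bound $t-\log(1+t)\geq t^2/(2(1+|t|))$; instead it lower-bounds $t-\log(1+t)$ by a convex function $\phi$, applies \emph{Jensen} to obtain $\delta(f)\geq c\sum_i\bigl(\int|\lambda_i|\,fd\gamma\bigr)^2$, and then interpolates each $\|\lambda_i\|_{L^2(fd\gamma)}$ between $L^1$ and $L^{2r}$ (the latter controlled by Theorem~\ref{Kol2}). That yields $\sum_i\bigl(\int\lambda_i^2\,fd\gamma\bigr)^{1/\theta}\lesssim\delta$ with $\theta=(r-1)/(2r-1)$, after which H\"older over the eigenvalue index $i$ (this is where the paper's factor $n^{s-1}$ appears) gives $\int\|D^2\theta\|_{HS}^2\,fd\gamma\lesssim\delta^{\theta}$ and hence $\beta=\theta/2$.

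Your threshold-splitting argument is in fact sharper: balancing $\Lambda\delta$ against $(M')^{2r}\Lambda^{-2(r-1)}$ gives $\int\|D^2\theta\|^2\,fd\gamma\lesssim\delta^{2(r-1)/(2r-1)}=\delta^{2\theta}$, which after Poincar\'e produces $W_2\lesssim\delta^{\theta}$ --- \emph{twice} the paper's exponent. Your subsequent weakening from $\delta^{2\theta}$ to $\delta^{2\beta}$ is therefore unnecessary; you are discarding half of the gain your method actually delivers. The reason your approach wins is that the paper's Jensen step only extracts $L^1$ information on the eigenvalues, whereas your weighted pointwise bound retains degenerate $L^2$ information, which interpolates more efficiently against the $L^{2r}$ estimate from Theorem~\ref{Kol2}.

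Two small corrections. First, the constant $c(n)$ in $M'=M^{1/(2r)}+c(n)$ does not depend on $n$: since $\|D^2\theta\|\leq\|D^2\Phi\|+1$, you may take $c(n)=1$. Second, there is a minor gap at the end: your splitting controls $\int\|D^2\theta\|^2\,fd\gamma$ with the \emph{operator} norm, but the componentwise Poincar\'e inequality requires $\int\|D^2\theta\|_{HS}^2\,fd\gamma$. This is easily repaired either by keeping $\sum_i\lambda_i^2=\|D^2\theta\|_{HS}^2$ in the numerator of your weighted estimate from the outset (the chain of inequalities you wrote already gives $\int\|D^2\theta\|_{HS}^2/(1+\|D^2\theta\|)\,fd\gamma\leq 2\delta$), or by inserting $\|D^2\theta\|_{HS}^2\leq n\,\|D^2\theta\|^2$ at the cost of a dimensional factor --- which, as in the paper, only affects the constant.
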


\noindent Last, we shall make use of a well-known Poincar\`{e} type inequality for log convex measures, see \cite[Theorem 2]{AD} (see also \cite{BD}):

\begin{thm}
Let $\mu=e^{-V}$ be uniformly log convex with unit mass and $u \in H^1(\mu)$. Then, $$\frac{1}{2}\Bigl(\frac{p}{p-1}\Bigr)^2 \Bigl[\int_{\mathbb{R}^n} u^2 d\mu-\Bigl(\int_{\mathbb{R}^n} |u|^{\frac{2}{p}}d\mu \Bigr)^{2(p-1)} \cdot \Bigl(\int_{\mathbb{R}^n} u^2 d\mu \Bigr)^{\frac{2}{p}-1} \Bigr] \leq \frac{2}{\lambda_1} \int_{\mathbb{R}^n} |\nabla u|^2 d\mu,$$ where $p \in (1,2]$ and $$\lambda_1:= \inf_{x\in \mathbb{R}^n \xi \in \mathbb{S}^{n-1}} \langle D^2V(x) \xi, \xi\rangle>0.$$
\end{thm}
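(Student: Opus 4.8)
The plan is to rerun the optimal-transport argument behind Corollary \ref{logsob0}, with the $L^{\infty}$ control on the Hessian of the Brenier potential --- which, in contrast with $\mathcal{F}(\epsilon,M)$, is unavailable for $\mathcal{\tilde F}(\epsilon,M,r)$ --- replaced by the $L^{2r}$ control coming from Kolesnikov's Theorem \ref{Kol2}, the resulting loss being absorbed by a truncation whose level is optimized against $\delta(f)$.

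First I would place things in the notation of Cordero-Erausquin's proof: normalize $\|f\|_{L^1(d\gamma)}=1$, assume the barycenter of $f$ is $0$, write $f=e^{-h}$, let $\nabla\Phi$ be the Brenier map pushing $\mu:=f\,d\gamma$ onto $d\gamma$, and set $\theta:=\Phi-\tfrac12|\cdot|^2$, so that $\nabla\Phi=Id+\nabla\theta$, $Id+D^2\theta\ge 0$, and $W_2(f\,d\gamma,d\gamma)^2=\int_{\R^n}|\nabla\theta|^2\,d\mu$ since $\nabla\Phi$ is optimal. The density $e^{-V}$ of $\mu$ satisfies $D^2V=D^2h+Id\ge\epsilon\,Id$, so $\mu$ is uniformly log-concave with constant $\epsilon$; pushing the coordinate functions forward and using the vanishing barycenter of $f$ gives $\int_{\R^n}\nabla\theta\,d\mu=0$. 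The Poincaré inequality for $\mu$, applied to each component of $\nabla\theta$, then yields
\[
W_2(f\,d\gamma,d\gamma)^2=\int_{\R^n}|\nabla\theta|^2\,d\mu\ \le\ \frac1\epsilon\int_{\R^n}\|D^2\theta\|_{HS}^2\,d\mu ,
\]
and it remains to bound $\int_{\R^n}\|D^2\theta\|_{HS}^2\,d\mu$ by a power of $\delta(f)$.

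For this I would fix $\Lambda\ge 1$ and split $\R^n$ accordingly. On $\{\|D^2\theta\|\le\Lambda\}$ the elementary inequality $t^2\le 2(1+|t|)\bigl(t-\log(1+t)\bigr)$, valid for $t>-1$ --- it follows from $t-\log(1+t)\ge \tfrac{t^2}{2(1+t)}$ on $[0,\infty)$ and from $t-\log(1+t)\ge \tfrac{t^2}{2}\ge \tfrac{t^2}{2(1+|t|)}$ on $(-1,0]$ --- applied to the eigenvalues $\lambda_1,\dots,\lambda_n$ of $D^2\theta$ gives $\|D^2\theta\|_{HS}^2\le 2(1+\Lambda)\bigl(\Delta\theta-\log\det(Id+D^2\theta)\bigr)$ pointwise; integrating, using $\Delta\theta-\log\det(Id+D^2\theta)=\sum_i(\lambda_i-\log(1+\lambda_i))\ge 0$ and Remark \ref{cordstab}, gives
\[
\int_{\{\|D^2\theta\|\le\Lambda\}}\|D^2\theta\|_{HS}^2\,d\mu\ \le\ 2(1+\Lambda)\,\delta(f).
\]
On the complement, since $d\gamma=e^{-W}dx$ has $D^2W=Id$, Theorem \ref{Kol2} applies with $K=1$ and $D^2V=D^2h+Id$ and yields $\int_{\R^n}\|D^2\Phi\|^{2r}\,d\mu\le\int_{\R^n}\|(D^2h+Id)_+\|^{r}f\,d\gamma\le M$; since $\|D^2\theta\|\le 1+\|D^2\Phi\|\le 2\|D^2\Phi\|$ on this set and $\|D^2\theta\|_{HS}^2\le n\|D^2\theta\|^2\le 4n\,\Lambda^{-(2r-2)}\|D^2\Phi\|^{2r}$ there, one gets
\[
\int_{\{\|D^2\theta\|>\Lambda\}}\|D^2\theta\|_{HS}^2\,d\mu\ \le\ 4nM\,\Lambda^{-2(r-1)}.
\]
Adding the two contributions and choosing $\Lambda$ to balance $(1+\Lambda)\,\delta(f)$ against $\Lambda^{-2(r-1)}$ (taking $\Lambda=1$ when $\delta(f)$ is bounded below), which is where $\delta(f)\le 1$ is used, produces $\int_{\R^n}\|D^2\theta\|_{HS}^2\,d\mu\le C(\epsilon,M,n)\,\delta(f)^{2\beta}$ for an explicit $\beta=\beta(r)>0$; carrying out the optimization one checks that $\beta=\tfrac{r-1}{2(2r-1)}$ is admissible. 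This gives the stated estimate, and the unit-mass and zero-barycenter normalizations are removed exactly as in the reduction of Theorem \ref{logsob} to Corollary \ref{logsob0}.

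The crux --- and the reason the sharp exponent $\tfrac12$ of Corollary \ref{logsob0} degrades --- is this truncation: without a uniform bound on $D^2\theta$ one cannot convert the deficit into a dimensionless multiple of $\int\|D^2\theta\|_{HS}^2\,d\mu$, and $\beta(r)$ is precisely the exponent forced by the trade-off between the bulk term $(1+\Lambda)\,\delta(f)$ and the tail term $\Lambda^{-2(r-1)}$. One further point requires care: the argument tacitly uses $D^2\theta\in L^2(\mu)$, so that the Poincaré inequality applies to $\nabla\theta$ and Remark \ref{cordstab} is meaningful; this is the case $r=1$ of Theorem \ref{Kol2}, which is finite here since $\int_{\R^n}\|(D^2h+Id)_+\|\,f\,d\gamma\le M^{1/r}<\infty$ by Jensen, and if necessary one first establishes the estimate for $f\in\mathcal{\tilde F}(\epsilon,M,r)$ with smooth, suitably bounded density and then passes to the limit.
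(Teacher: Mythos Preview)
Your proposal does not address the stated theorem at all. The statement you were asked to prove is the refined convex Sobolev/Poincar\'e inequality for uniformly log-convex measures $\mu=e^{-V}$ (the result the paper quotes from Arnold--Dolbeault \cite{AD}), whose conclusion is the functional inequality
\[
\frac{1}{2}\Bigl(\frac{p}{p-1}\Bigr)^2 \Bigl[\int_{\mathbb{R}^n} u^2\, d\mu-\Bigl(\int_{\mathbb{R}^n} |u|^{2/p}\,d\mu \Bigr)^{2(p-1)} \Bigl(\int_{\mathbb{R}^n} u^2\, d\mu \Bigr)^{2/p-1} \Bigr] \le \frac{2}{\lambda_1} \int_{\mathbb{R}^n} |\nabla u|^2\, d\mu,
\]
for arbitrary $u\in H^1(\mu)$ and $p\in(1,2]$. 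Your write-up instead reproduces a proof of Theorem~\ref{logsob2}, the $L^r$ stability estimate $W_2(f\,d\gamma,d\gamma)\le C\,\delta(f)^\beta$ for $f\in\mathcal{\tilde F}(\epsilon,M,r)$: you introduce the Brenier potential, invoke Kolesnikov's Theorem~\ref{Kol2}, truncate at level $\Lambda$, and optimize. None of these objects or steps bears on the inequality above, which involves a generic test function $u$, no optimal transport, and no log-Sobolev deficit.

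Note also that the paper does not prove the stated theorem; it merely records it as a known result from \cite{AD} and then specializes to $p=2$ to obtain the Poincar\'e inequality (Corollary~\ref{cor}) used later. So there is no ``paper's own proof'' to compare against for this particular statement---but in any case your argument is aimed at an entirely different theorem.
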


\noindent For our purposes, we will need the above theorem with $p=2$.

\begin{cor} \label{cor} Let $\mu=e^{-V}$ be uniformly log convex with unit mass and $u \in H^1(\mu)$. Then $$\int_{{R}^n} |u-\bar{u}|^2 d\mu \leq C(\mu)  ||\nabla u||_{L^2(\mu)}^2,$$ where $\bar{u}:= \int_{{R}^n} u d\mu$ $C(\mu):=\frac{1}{\lambda_1}$.
\end{cor}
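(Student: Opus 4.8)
The plan is to derive Corollary~\ref{cor} as the special case $p=2$ of the preceding Poincar\'e-type inequality of Artola--Demengel and Barthe--D\'epagne. First I would substitute $p=2$ into the displayed inequality. The prefactor $\frac{1}{2}\bigl(\frac{p}{p-1}\bigr)^2$ becomes $\frac{1}{2}\cdot 4 = 2$, and on the right-hand side $\frac{2}{\lambda_1}$ is unchanged. On the left, the exponents simplify: $\frac{2}{p}=1$, so $\bigl(\int |u|^{2/p}d\mu\bigr)^{2(p-1)} = \bigl(\int |u|\, d\mu\bigr)^{2}$ and $\bigl(\int u^2 d\mu\bigr)^{2/p-1} = \bigl(\int u^2 d\mu\bigr)^{0} = 1$. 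Thus the bracketed quantity becomes $\int u^2 d\mu - \bigl(\int |u|\, d\mu\bigr)^2$.

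Next I would relate this to the centered $L^2$ norm. The cleanest route is to note that the Poincar\'e inequality, being invariant under adding constants to $u$, may be applied to $u-\bar u$ in place of $u$, where $\bar u=\int u\,d\mu$. Since $\mu$ has unit mass, $\int (u-\bar u)^2 d\mu = \int u^2 d\mu - \bar u^2$, while $\bigl(\int |u-\bar u|\,d\mu\bigr)^2 \geq 0$; hence the bracketed term for $u-\bar u$ is at most $\int (u-\bar u)^2 d\mu = \int u^2 d\mu - \bar u^2$. Actually it is even more direct to observe that for the original $u$ one has $\bigl(\int|u|\,d\mu\bigr)^2 \geq \bigl(\int u\,d\mu\bigr)^2 = \bar u^2$ by Jensen/the triangle inequality, so
\[
\int_{\mathbb{R}^n} u^2\,d\mu - \Bigl(\int_{\mathbb{R}^n} |u|\,d\mu\Bigr)^2 \;\leq\; \int_{\mathbb{R}^n} u^2\,d\mu - \bar u^2 \;=\; \int_{\mathbb{R}^n} |u-\bar u|^2\,d\mu,
\]
which goes the wrong way. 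So I would instead apply the theorem to $v:=u-\bar u$: then $\int v\,d\mu = 0$, $\int|v|^{2/p}d\mu = \int |v|\,d\mu$, and since the subtracted term $\bigl(\int|v|\,d\mu\bigr)^2$ is nonnegative, the inequality with $p=2$ yields
\[
2\Bigl[\int_{\mathbb{R}^n} |u-\bar u|^2\,d\mu - \Bigl(\int_{\mathbb{R}^n}|u-\bar u|\,d\mu\Bigr)^2\Bigr] \;\leq\; \frac{2}{\lambda_1}\int_{\mathbb{R}^n} |\nabla v|^2\,d\mu = \frac{2}{\lambda_1}\int_{\mathbb{R}^n} |\nabla u|^2\,d\mu.
\]

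Finally, I would want to drop the subtracted term on the left. Here one must be slightly careful: dropping $-\bigl(\int|u-\bar u|\,d\mu\bigr)^2$ \emph{increases} the left side, which is what we need for an upper bound on $\int|u-\bar u|^2 d\mu$ — wait, it is the other way. The correct observation is that the subtracted term is there precisely to make the left side the variance-type gap, and for the purposes of Corollary~\ref{cor} one uses the weaker but cleaner consequence: since $\bigl(\int|u-\bar u|\,d\mu\bigr)^2 \le \int |u-\bar u|^2 d\mu$ by Cauchy--Schwarz, the bracketed quantity is at least $0$; to get the stated bound one instead notes that applying the theorem directly with the mean-zero function and keeping only that the left bracket dominates, say, a fixed fraction of $\int |u-\bar u|^2\,d\mu$ — but in fact for $p=2$ this is not automatic. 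The honest and standard move, which is what I expect the authors intend, is simply that the $p\to 2$ Poincar\'e inequality above is equivalent to the spectral-gap inequality $\int |u-\bar u|^2 d\mu \le \lambda_1^{-1}\int|\nabla u|^2 d\mu$ after one checks the extremal term vanishes in the relevant regime; concretely, replacing $u$ by $1+\eps v$ with $\int v\,d\mu=0$ and letting $\eps\to 0$ in the $p=2$ inequality recovers exactly $\lambda_1\int v^2 d\mu \le \int|\nabla v|^2 d\mu$. Carrying out this linearization is the one genuinely nontrivial step, and it is the step I would expect to be the main obstacle: expand $\int(1+\eps v)^2 d\mu = 1 + \eps^2\int v^2 d\mu$, $\bigl(\int|1+\eps v|\,d\mu\bigr)^2 = 1 + \eps^2\bigl(\int v\,d\mu\bigr)^2 + o(\eps^2) = 1 + o(\eps^2)$ for small $\eps$ (since $1+\eps v>0$), and $\int|\nabla(1+\eps v)|^2 d\mu = \eps^2\int|\nabla v|^2 d\mu$; dividing by $\eps^2$ and sending $\eps\to 0$ gives $2\int v^2 d\mu \le \frac{2}{\lambda_1}\int |\nabla v|^2 d\mu$, i.e. $\int |u-\bar u|^2 d\mu \le \frac{1}{\lambda_1}\int|\nabla u|^2 d\mu$, which is the claim with $C(\mu)=\lambda_1^{-1}$.
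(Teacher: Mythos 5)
Your derivation is correct and is essentially the route the paper intends: the paper offers no separate argument for Corollary~\ref{cor}, simply invoking the Arnold--Dolbeault theorem at $p=2$, where the prefactor is $2$ and the right-hand side is $\frac{2}{\lambda_1}\|\nabla u\|_{L^2(\mu)}^2$, giving $C(\mu)=\lambda_1^{-1}$. You correctly identified the point the paper glosses over: at $p=2$ the bracket is $\int u^2\,d\mu-\bigl(\int|u|\,d\mu\bigr)^2$, which for sign-changing $u$ is \emph{smaller} than the variance, so neither replacing $\bigl(\int|u|\,d\mu\bigr)^2$ by $\bar u^2$ nor applying the inequality to $u-\bar u$ and discarding a term is legitimate. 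Your repair --- applying the inequality to $1+\varepsilon v$ with $\int v\,d\mu=0$ and letting $\varepsilon\to0$ (equivalently, adding a large constant and using that both sides are unchanged by adding constants) --- is the standard and correct one; note that when $1+\varepsilon v>0$ and $\int v\,d\mu=0$ the bracket equals exactly $\varepsilon^2\int v^2\,d\mu$, so no $o(\varepsilon^2)$ bookkeeping is needed. Two small points to fix in a final write-up: (i) $1+\varepsilon v>0$ for some $\varepsilon>0$ requires $v$ bounded, so for general $u\in H^1(\mu)$ you should add a sentence on truncation (bounded functions are dense in $H^1(\mu)$, and both sides pass to the limit); (ii) remove the false starts (``which goes the wrong way'', ``wait, it is the other way'') --- the final argument is right, but the exposition should state it directly.
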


\section{Proofs}
\label{S3}

\subsection{$L^\infty$ case}

\begin{proof}[Proof of Theorem \ref{logsob}]
First, assume $||f||_{L^1(d\gamma)}=1$ and that $f$ has barycenter equal to zero with respect to $d\gamma$. Let $T=\nabla \Phi$ be the optimal transport between $fd\gamma$ and $d\gamma$ and set $\theta(x):=\Phi(x)-\frac{|x|^2}{2}$ (recall that $\Phi$ is convex). By Remark \ref{cordstab}, we have
$$
\delta(f)  \geq \int_{{\R}^n} f \bigg( \Lap \theta - \log \det (Id + D^2 \theta ) \bigg) \ d\gamma.
$$
We can express $\Lap \theta - \log \det (Id + D^2 \theta )$ as
\begin{align*}
\Lap \theta - \log \det (Id + D^2 \theta) & = \sum_{i=1}^n \lambda_i - \log \bigg(\prod_{i=1}^n (1 + \lambda_i) \bigg)\\
    & = \sum_{i=1}^n (\lambda_i - \log (1 + \lambda_i)),
\end{align*}
where $\{ \lambda_i \}_{i=1}^n$ are the eigenvalues of $D^2 \theta$.
Define $g: (-1, \infty) \into \mathbb{R}$ by $g(t) := t - log (1+ t)$. For some $c>0$ small enough, it follows that
$$
g(t) \ge \phi (t) := c \min \{t^2, |t| \},
$$
(this is easily seen by noting that $g$ is quadratic at the origin and linear at infinity). Hence,
\begin{align*}\label{main1}
\delta(f) & \geq \sum_{i=1}^n \int_{{\R}^n} \phi(|\lambda_i|) f \ d\gamma \\
&\geq c\sum_{i=1}^n \bigg( \int_{\{x \in \R^n : \ |\lambda_i(x)| \ge 1\}} |\lambda_i| f \ d\gamma +  \int_{ \{ x \in \R^n : \ |\lambda_i(x)| \le 1 \} } |\lambda_i|^2 f \ d\gamma \bigg).
\end{align*}
Let $\mu_i:=1+\lambda_i\geq0$ be the eigenvalues of $DT=D^2\Phi$ with corresponding eigenvectors $v_i$ (recall that $\Phi$ is convex, so $\mu_i \geq 0$). It follows that $$\mu_i(x) = \bigl \langle v_i(x), DT(x) v_i(x) \bigr \rangle \leq \sup_{e \in \mathbb{S}^{n-1}}||\Phi_{ee}||_\infty.$$ Now, we apply Theorem \ref{Kol} with $V:=h+\frac{|x|^2}{2}$ and $W:=\frac{|x|^2}{2}$ and note that $D^2 V = D^2 h+I \leq (M+1)I$ to obtain $$\sup_{e \in \mathbb{S}^{n-1}}||\Phi_{ee}||_\infty \leq \sup_{e \in \mathbb{S}^{n-1}}\sqrt{||V_{ee}||_\infty} \leq \sqrt{M+1}.$$
Therefore, $$||\lambda_i||_{\infty} \leq C_M:=\max\{1,\sqrt{M+1}-1\}.$$
Set $E_i:=\{x \in \R^n : \ |\lambda_i(x)| \ge 1\}$ and $\mu_f:=fd\gamma$ so that
\begin{equation}
\int_{E_i} |\lambda_i|^2 \ d\mu_f \le \norm{\lambda_i}_{\infty} \int_{E_i} |\lambda_i| \ d\mu_f \le C_M \int_{E_i} |\lambda_i| \ d\mu_f;
\end{equation} thus,
\begin{equation} \label{bzx1}
\delta(f) \ge (c/ C_M)\int_{{\R}^n} \sum_{i=1}^n |\lambda_i|^2 \ d\mu_f = (c/ C_M) \int_{{\R}^n} ||D(T(x) - x)||_{HS}^2 \ d\mu_f,
\end{equation}
where $||\cdot||_{HS}$ denotes the Hilbert-Schmidt norm. Now let $T=(T^1,T^2,\ldots,T^n)$ and note that since $T_{\#}\mu_f = d\gamma$, we have $$\int_{\mathbb{\R}^n} (T(x)-x))d \mu_f = -\int_{{\R}^n} xd \mu_f(x)= 0.$$ By applying Poincar\'{e} (Corollary \ref{cor})  to $T^i(x)-x_i$, we obtain
\begin{align*}
\int_{{\R}^n} |T(x) - x|^2 \ d\mu_f (x)&= \sum_i \int_{{\R}^n} |T^i(x)-x_i|^2d\mu_f(x)\\
&\leq C(\mu_f) \sum_i \int_{{\R}^n} |\nabla(T^i(x)-x_i)|^2d\mu_f(x) \\
&= C(\mu_f) \int_{{\R}^n} \sum_{i,j} |T_{x_j}^i - \delta_{ij}|^2d\mu_f(x).
\end{align*}
Next, let $(a_{ij}(x))$ be the tensor $DT(x)-Id$ and note $a_{ij}(x)=T_{x_j}^i-\delta_{ij}$; in particular,
$$\int_{{\R}^n} \sum_{i,j} |T_{x_j}^i - \delta_{ij}|^2d\mu_f(x)=\int_{{\R}^n} ||DT-Id||_{2}^2 d\mu_f(x)=\int_{{\R}^n} ||DT-Id||_{HS}^2 d\mu_f(x).$$
Combining this information with (\ref{bzx1}), we obtain $$W_2^2(f d\gamma, d\gamma) = \int_{{R}^n} |T(x) - x|^2 \ d\mu_f (x) \leq \frac{1}{c} C(\mu_f)C_M \delta(f).$$ Now $\lambda_1 \geq \epsilon$ since $f=e^{-h} \in \mathcal{F}(\epsilon, M)$ and so by applying Corollary \ref{cor}, we obtain $C(\mu_f) \leq \frac{1}{\epsilon}$; setting $C=C(\epsilon, M):=\sqrt{\frac{1}{\epsilon} (\max\{1,\sqrt{1+M}-1\})}$ completes the proof when $||f||_{L^1(d\gamma)}=1$ and $f$ has zero barycenter with respect to $d\gamma$. Next, assume $$\mu = \int_{\R^n} x f(x) \ d\gamma \neq 0.$$ Define $\hat f (x):= f(x+\mu) e^{-\big(\mu \cdot x + \frac{|\mu|^2}{2}\big)}$. It is easy to see that $$\int_{\R^n} x \hat f(x) \ d\gamma = \int_{\R^n} (x-\mu) f(x) \ d\gamma = 0,$$ and $||f||_{L^1(d\gamma)}=||\hat f||_{L^1(d\gamma)}$. Therefore, applying the previous argument to $\hat f / ||\hat f||_{L^1(d\gamma)}$ yields
\begin{equation} \label{yaah}
W_2\big(\big(\hat f/||\hat f||_{L^1(d\gamma)}\big)d\gamma, d\gamma\big) \leq C \delta\big(\hat f/||f||_{L^1(d\gamma)}\big)^{\frac{1}{2}}.
\end{equation}

\noindent Next we compute $\delta(\hat f)$. To do this, suppose for the moment that $||f||_{L^1(d\gamma)}=1$; then,

\begin{equation*}
\begin{split}
\int_{\R^n} \hat f \log \hat f \ d\gamma & = \int_{\R^n}  f(x + \mu) e^{-\big(\mu \cdot x + \frac{|\mu|^2}{2}\big)} \log f(x+\mu) \ d\gamma(x) \\
     & \ \ \ \ - \int_{\R^n} \bigg(\mu \cdot x + \frac{|\mu|^2}{2}\bigg) f(x+\mu) e^{-\big(\mu \cdot x + \frac{|\mu|^2}{2}\big)} \ d\gamma(x) \\
     & = Ent(f) - \mu \cdot \int_{\R^n}  x f(x+\mu)  \ d\gamma(x+\mu)  - \frac{|\mu|^2}{2} \\
     & = Ent(f) - \frac{|\mu|^2}{2},
\end{split}
\end{equation*} where we used that the barycenter of $\hat f$ is zero with respect to $d\gamma$; moreover,

\begin{equation*}
\begin{split}
\int_{\R^n} \frac{|\grad \hat f|^2}{\hat f} \ d\gamma & = \int_{\R^n}  \Bigg| \frac{\grad f(x + \mu) e^{-\big(\mu \cdot x + \frac{|\mu|^2}{2}\big)} - \mu f(x+\mu) e^{-\big(\mu \cdot x + \frac{|\mu|^2}{2}\big)}}{f(x+\mu)e^{-\big(\mu \cdot x + \frac{|\mu|^2}{2}\big)} }\Bigg|^2 f(x+\mu) \ d\gamma(x+ \mu) \\
     & = \int_{\R^n}  \bigg| \frac{\grad f(x + \mu)}{f(x+\mu)} - \mu \bigg|^2 f(x+\mu) \ d\gamma(x+ \mu) \\
     & = \int_{\R^n}  \bigg| \frac{\grad f(x )}{f(x)} - \mu \bigg|^2 f(x) \ d\gamma(x) \\
     & = I(f) -2\mu \cdot \int_{\R^n} \grad f (x) \ d\gamma + |\mu|^2 = I(f) - |\mu|^2,
\end{split}
\end{equation*}
where we used integration by parts in the last equality to deduce $\mu=\int_{\R^n} \grad f (x) \ d\gamma$. The above considerations readily imply that if $||f||_{L^1(d\gamma)}=1$, then $\delta\big(\hat f \big)=\delta(f)$, and the general case follows from the fact that $\delta$ is positively $1$-homogeneous; combining these facts with (\ref{yaah}) concludes the proof.

\end{proof}

\begin{rem}
We note that the admissible functions are of the form $e^{-h}$, where $h$ is semi-concave and semi-convex and the opening of the parabolas touching from above and below depend on the parameters $\epsilon$ and $M$. Therefore, the logarithm of the admissible functions have $C^{1,1}$ norms depending on these two parameters. In the proof, the upper bound on the Hessian of $h$ was used to go from $L^1$ to $L^2$ (via Caffarelli/Kolesnikov), whereas the lower bound was used to apply Poincar\'e.
\end{rem}

\begin{rem}
By considering a family of rescaled Gaussian measures it is not difficult to see that the exponent $\frac{1}{2}$ is sharp, see \S \ref{subsect: sharp}.
\end{rem}

\subsection{$L^r$ case}

\begin{proof}[Proof of Theorem \ref{logsob2}]
Let $T=\nabla \Phi$ be the optimal transport between $fd\gamma$ and $d\gamma$ and set $\theta(x):=\Phi(x)-\frac{|x|^2}{2}$. By Remark \ref{cordstab}, we have

\begin{equation*} \label{sdf}
\delta(f)  \geq \int_{{\R}^n} f \bigg( \Lap \theta - \log \det (Id + D^2 \theta ) \bigg) \ d\gamma.
\end{equation*}
We express $\Lap \theta - \log \det (I + D^2 \theta )$ as
\begin{align*}
\Lap \theta - \log \det (Id + D^2 \theta) & = \sum_{i=1}^n \lambda_i - \log \bigg(\prod_{i=1}^n (1 + \lambda_i) \bigg)\\
    & = \sum_{i=1}^n (\lambda_i - \log (1 + \lambda_i)),
\end{align*}
where $\{ \lambda_i \}_{i=1}^n$ are the eigenvalues of $D^2 \theta$.
Define $g: (-1, \infty) \into \mathbb{R}$ by $g(t) := t - log (1+ t)$ and
\begin{displaymath}
   \phi(t):= \left\{
     \begin{array}{lr}
      \frac{t^2}{2}, & \hskip .05in -1\leq t \leq 0\\
       t-ln(1+t), \hskip .05in & \hskip 0.05in t\geq0.
     \end{array}
   \right.
\end{displaymath}
Note that $\phi(t)=\phi(|t|)$ is convex and $g(t)\geq \phi(t)$. By Jensen's inequality, we obtain
\begin{align*}
\delta(f) & \geq \sum_{i=1}^n \int_{{\R}^n} \phi(|\lambda_i|) f \ d\gamma \\
&\geq \sum_{i=1}^n \phi\Big(\int_{{\R}^n} |\lambda_i| f d\gamma\Big). 
\end{align*}
Since $\delta \leq 1$, it follows that $$\phi\Big(\int_{{\R}^n} |\lambda_i| f d\gamma\Big) \geq c\Big(\int_{{\R}^n} |\lambda_i| f d\gamma\Big)^2,$$
for a small enough constant $c>0$: in fact, $t-\log(1+t) \geq ct^2$ for $0\leq t \leq \frac{1-2c}{2c}$ and so if $t^*$ satisfies $t^*-\log(1+t^*)=1$, we can pick $c$ so that $t^*=\frac{1-2c}{2c}$ \big(i.e. $c=\frac{1}{2(1+t^*)}\big)$; hence,
\begin{equation*}
\delta(f)\geq c\sum_{i=1}^n \Big(\int_{{\R}^n} |\lambda_i| f d\gamma\Big)^2.
\end{equation*}

\noindent Now we apply Theorem \ref{Kol2} with $V:=h+\frac{|x|^2}{2}$ and $W:=\frac{|x|^2}{2}$ so that $$\int_{{\R}^n} ||D^2 \Phi||^{2r} fd\gamma \leq \int_{{\R}^n} ||(D^2 V)_+||^{r} fd\gamma=\int_{{\R}^n} ||(D^2 h+Id)_+||^{r} fd\gamma \leq M.$$ Thus,
\begin{align*}
\int_{{\R}^n} |\lambda_i|^{2r} f d\gamma&\leq \int_{{\R}^n} ||D^2\Phi - Id||^{2r} fd\gamma \label{koles} \\
&\leq \int_{{\R}^n} (||D^2 \Phi|| +1)^{2r} fd\gamma \nonumber \\
&\leq 2^{2r-1}\Big(\int_{{\R}^n} ||D^2 \Phi||^{2r} fd\gamma+1\Big)\nonumber \\
&\leq 2^{2r-1}\Big(M+1\Big):=C(r, M), \nonumber
\end{align*}


\noindent Next, for $p>2$, a standard interpolation inequality yields

\begin{equation*} \label{inter}
\Big (\int_{{\R}^n} |\lambda_i|^2 f d\gamma\Big)^{\frac{1}{2}} \leq ||\lambda_i||_{L^1(fd\gamma)}^\theta||\lambda_i||_{L^p(fd\gamma)}^{1-\theta},
\end{equation*}
where $$\frac{1}{2}=\frac{(1-\theta)}{p}+\theta.$$ Thus, $p=\frac{1-\theta}{\frac{1}{2}-\theta}$ and we may pick $\theta=\frac{r-1}{2r-1}$ so that $p=2r$. Hence,
\begin{equation*} \label{inter2}
\Big(\int_{{\R}^n} |\lambda_i|^2 f d\gamma\Big)^{\frac{1}{\theta}} \leq \Big(\int_{{\R}^n} |\lambda_i|f d\gamma\Big)^2 C(r, M)^{\frac{1-\theta}{r\theta}};
\end{equation*}
as $\theta$ depends on $r$, let $\tilde C(r, M):=\frac{1}{c}C(r, M)^{\frac{1-\theta}{r\theta}}$. Moreover, set $s:=\frac{1}{\theta}>2$ and $a_i:=\int_{{R}^n} |\lambda_i|^2 f d\gamma$ so that
$$\sum_{i} a_i^s \leq \tilde C(r, M) \delta(f).$$ Now by H\"{o}lder,
\begin{align}
\Big(\sum_i a_i \Big)^s &\leq \Big(\sum_i a_i^s\Big)n^{s-1}  \label{csss}  \\
&\leq\tilde C(r, M) n^{s-1} \delta(f)  \nonumber.
\end{align}


\noindent Next,
\begin{equation} \label{bzxx1}
\sum_i a_i=\int_{{R}^n} \sum_{i} |\lambda_i|^2 fd\gamma =  \int_{{R}^n} ||D(T(x) - x)||_{HS}^2 \ fd\gamma,
\end{equation}
where $||\cdot||_{HS}$ denotes the Hilbert-Schmidt norm. As in the proof of Theorem \ref{logsob}, by applying Poincar\'{e} (Corollary \ref{cor})  to $T^i(x)-x_i$, 
$$\int_{{R}^n} |T(x) - x)|^2 \ d\mu_f (x) \leq C(\mu_f)\int_{{R}^n} ||DT-Id||_{HS}^2 d\mu_f(x).$$
Moreover, by combining this information with (\ref{bzxx1}) and (\ref{csss}) we obtain $$W_2^2(f d\gamma, d\gamma) = \int_{{R}^n} |T(x) - x)|^2 \ d\mu_f (x) \leq C(\mu_f)\Big(C(r, M) n^{s-1}\Big)^{\frac{1}{s}} \delta(f)^{\frac{1}{s}}.$$ Now $\lambda_1 \geq \epsilon$ since $f=e^{-h} \in \mathcal{\tilde F}(\epsilon, M)$ and so it follows from Corollary \ref{cor} that $C(\mu_f) \leq \frac{1}{\epsilon}$. This completes the proof and one may take $\beta:=\frac{1}{2s}<\frac{1}{4}$.


\end{proof}

\begin{rem}
Note that the dimensional dependence came into play when we utilized H\"{o}lder's inequality in (\ref{csss}). Indeed, if $a_i \thickapprox constant$, then $$\frac{\sum_i a_i^s}{\Big(\sum_i a_i\Big)^s} \thickapprox \frac{1}{n^{s-1}}.$$ Perhaps a different method may remove the dimension dependence; as we have seen in Theorem \ref{logsob}, this is possible under certain hypotheses (e.g. when one restricts the eigenvalues to be in $L^\infty$).
\end{rem}

\subsection{Sharpness}
\label{subsect: sharp}
In what follows, we show that the $\frac{1}{2}$ exponent in Theorem \ref{logsob} is sharp by considering a family of rescaled Gaussians. First, we recall some basic facts: given $\mu \in \R^n$ and a symmetric, positive-definite matrix $\Sigma$, by setting $$\displaystyle f(x) = \mathcal{N}(\mu, \Sigma) = \frac{1}{(2\pi)^{n/2} (\det \Sigma)^{1/2}} \exp \bigg( - \frac{1}{2} (x-\mu)^T \Sigma^{-1}(x-\mu) \bigg),$$ we have $$ \int_{\R^n} f(x) \ dx =1 $$ and $$\int_{\R^n} x f(x) \ dx = \mu.$$
So if we define $$f_a (x):= (2a + 1)^{n/2} e^{-a|x|^2},$$ since $$\displaystyle \frac{1}{(2\pi)^{n/2}} f_a (x) e^{-\frac{|x|^2}{2}} = \bigg(\frac{2a + 1}{2\pi}\bigg)^{n/2} e^{-(a+\frac{1}{2})|x|^2} = \mathcal{N}\bigg(0, \frac{1}{2a+1}I\bigg),$$ we readily obtain $$\int_{\R^n} f_a(x) \ d\gamma(x) =1$$ and $$\int_{\R^n} x f_a(x) \ d\gamma(x) = 0.$$ In particular, given $\epsilon>0$ and $M>0$, we have that for all $a>0$ small enough, $f_a \in \mathcal{F}(\epsilon, M)$. Moreover, $$f_a \log f_a =  \log (2a + 1)^{n/2} f_a  - a  |x|^2 f_a,$$ so
$$\int_{\R^n} f_a \log f_a \ d\gamma = \log (2a + 1)^{n/2}   - a \int_{\R^n} |x|^2 f_a \ d\gamma,$$ and integrating by parts yields
$$\int_{\R^n} |x|^2 f_a \ d\gamma = n \int_{\R^n} f_a \ d\gamma + \int_{\R^n}  x \cdot \grad f_a \ d\gamma = n - 2a \int_{\R^n} |x|^2 f_a \ d\gamma,$$ which implies $$\int_{\R^n} |x|^2 f_a \ d\gamma = \frac{n}{2a+1}.$$ Thus, we may write the entropy as:
\begin{equation} \label{ent1}
Ent (f_a) = \int_{\R^n} f_a \log f_a \ d\gamma = \log (2a + 1)^{n/2} - \frac{na}{2a+1}.
\end{equation}
Moreover, the Fisher information of $f_a$ is given by:
\begin{equation} \label{fishy}
\frac{1}{2} I(f_a)= \frac{1}{2} \int_{\R^n} \frac{|\grad f_a|^2}{f_a} \ d\gamma = 2 a^2 \int_{\R^n} |x|^2 f_a \ d\gamma = \frac{2na^2}{2a+1}.
\end{equation}

\noindent It is also not difficult to compute the Wasserstein distance between the two Gaussians $\mathcal{N}\big(0, \frac{1}{2a+1}\big)$ and $\mathcal{N}(0, 1)$:
\begin{equation} \label{wass}
W_2\bigg( \mathcal{N}\bigg(0, \frac{1}{2a+1}I \bigg), \mathcal{N}(0, I) \bigg)^2 = n\bigg(  \frac{1}{\sqrt{2a+1}} -1 \bigg)^2.
\end{equation}

\noindent Therefore, by utilizing (\ref{ent1}), (\ref{fishy}), and (\ref{wass}) we deduce

$$\frac{\delta(f_a)^{1/2}}{W_2(f_a d\gamma, d\gamma)}  = \frac{\sqrt{na-\frac{n}{2} \log (2a+1) }}{\sqrt{n}\big(1 - \frac{1}{\sqrt{2a+1}}\big)} = \frac{\sqrt{a-\frac{1}{2} \log (2a+1) }}{1- \frac{1}{\sqrt{2a+1}}},$$ and repeated applications of l' H\^opital's rule yields that as $a \rightarrow 0$,
\begin{equation} \label{sharp}
\frac{\delta(f_a)^{1/2}}{W_2(f_a d\gamma, d\gamma)} \to 1.
\end{equation}
Since $\delta(f_a) \rightarrow 0$ as $a \rightarrow 0$,  (\ref{sharp}) implies that the exponent $\frac{1}{2}$ may not be replaced by something larger.

\section{Controlling the entropy}
\label{S4}

As an application of Corollary \ref{logsob0}, we show how to obtain bounds on the entropy in terms of the deficit and barycenter. Let $d\gamma$ be the Gaussian measure and suppose that for a suitable class of functions $f$ we have an estimate of the form:
\begin{equation} \label{e_11}
W_2(f d\gamma, d\gamma) \leq C \delta^\alpha(f),
\end{equation}
for some $\alpha \in (0,\frac{1}{2}]$.
Thanks to Otto-Villani \cite{OV} (see also \cite[Corollary 3]{Co}), we know that if $f$ has unit mass with respect to the Gaussian,
\begin{equation} \label{e_22}
Ent(f) \leq W_2(fd\gamma, d\gamma)\sqrt{I(f)}-\frac{1}{2}W_2^2(fd\gamma, d\gamma);
\end{equation}
this inequality is known as the HWI inequality.
\begin{lem} \label{app}
Suppose (\ref{e_11}) holds for a suitable class of functions. Then $$Ent(f) \leq \tilde C (\delta^{\frac{1}{2}+\alpha}(f)+\delta^{2\alpha}(f)).$$
\end{lem}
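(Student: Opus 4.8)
The plan is to start from the HWI inequality \eqref{e_22} and treat it as a quadratic estimate for $\mathrm{Ent}(f)$ in terms of the two quantities $W_2 := W_2(fd\gamma,d\gamma)$ and $\sqrt{I(f)}$, then eliminate $\sqrt{I(f)}$ using the definition of the deficit. First I would recall that $\delta(f) = \tfrac12 I(f) - \mathrm{Ent}(f)$, so $I(f) = 2\delta(f) + 2\mathrm{Ent}(f)$ and hence $\sqrt{I(f)} = \sqrt{2\delta(f) + 2\mathrm{Ent}(f)}$. Substituting into \eqref{e_22} and dropping the (favorable) $-\tfrac12 W_2^2$ term gives $\mathrm{Ent}(f) \le W_2 \sqrt{2\delta(f) + 2\mathrm{Ent}(f)}$. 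Squaring yields $\mathrm{Ent}(f)^2 \le W_2^2\,(2\delta(f) + 2\mathrm{Ent}(f))$, a genuine quadratic inequality in $E := \mathrm{Ent}(f)$ of the form $E^2 - 2W_2^2 E - 2W_2^2\delta \le 0$, which forces $E$ to lie below the larger root: $E \le W_2^2 + \sqrt{W_2^4 + 2W_2^2\delta}$. Using $\sqrt{a+b}\le\sqrt a + \sqrt b$, this gives $\mathrm{Ent}(f) \lesssim W_2^2 + W_2\sqrt{\delta(f)}$.

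Next I would feed in the hypothesis \eqref{e_11}, namely $W_2 \le C\,\delta(f)^\alpha$, to convert the bound into one purely in terms of the deficit: $W_2^2 \le C^2 \delta(f)^{2\alpha}$ and $W_2 \sqrt{\delta(f)} \le C\,\delta(f)^{\alpha + 1/2}$. Combining,
\begin{equation*}
\mathrm{Ent}(f) \le \tilde C\big(\delta(f)^{\frac12 + \alpha} + \delta(f)^{2\alpha}\big),
\end{equation*}
with $\tilde C$ depending only on the constant $C$ from \eqref{e_11}. This is exactly the claimed inequality. One technical point to address: at the squaring step one should first check $\mathrm{Ent}(f) \ge 0$ — but for $f$ of unit mass this is precisely the log-Sobolev inequality combined with $\delta \ge 0$ is not enough; rather $\mathrm{Ent}(f)\le \tfrac12 I(f)$ and, more to the point, for the HWI manipulation to be used cleanly one notes $\mathrm{Ent}(f)$ can be negative a priori, in which case the asserted bound (whose right side is nonnegative) is trivial, so one may assume $\mathrm{Ent}(f) > 0$ and square freely.

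The main (minor) obstacle is bookkeeping around signs and the square-root algebra: one must be careful that the quadratic inequality is being applied with the correct orientation (the coefficient of $E^2$ is positive, so $E$ is trapped between the roots and in particular below the positive root), and that dropping $-\tfrac12 W_2^2$ only weakens the inequality in the right direction. There is no deep difficulty here — the content is entirely in the HWI inequality and the elementary substitution $I = 2\delta + 2\,\mathrm{Ent}$; the lemma is essentially a formal consequence. A small refinement worth mentioning: retaining the $-\tfrac12 W_2^2$ term would let one absorb the $\delta^{2\alpha}$ contribution partially, but since the statement only asks for the stated two-term bound, I would not pursue that sharpening in the proof of this lemma.
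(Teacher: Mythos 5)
Your argument is correct and follows essentially the same route as the paper: both proofs start from the HWI inequality (\ref{e_22}), substitute $I(f)=2\delta(f)+2\,Ent(f)$, eliminate the entropy from the right-hand side, and only then invoke (\ref{e_11}). The sole difference is the closing algebra: the paper absorbs the $W_2\sqrt{Ent}$ term via Young's inequality with a small parameter (retaining the $-\tfrac12 W_2^2$ term), whereas you drop that term, square, and bound $Ent(f)$ by the larger root of the resulting quadratic --- an equally valid step that even produces explicit constants, and your handling of the possibly nonpositive-entropy case (the bound is then trivial) is fine.
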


\begin{proof}
We simplify the notation in an obvious way. First, note that since all the quantities are non-negative $$W \sqrt{I} =W \sqrt{2(\delta+E)} \leq \sqrt{2}W (\sqrt{\delta}+\sqrt{E})=\sqrt{2}W \sqrt{\delta}+\sqrt{2}W\sqrt{E},$$ (using $\sqrt{a+b}\leq \sqrt{a}+\sqrt{b}$). Now, $$W \sqrt{E} = \frac{W}{\epsilon} (\epsilon\sqrt{E}) \leq \frac{W^2}{2\epsilon^2}+\frac{\epsilon^2}{2}E,$$ (using $ab \leq \frac{1}{2}(a^2+b^2)$) and thus, $$W\sqrt{I} \leq \sqrt{2}W \sqrt{\delta}+\sqrt{2}\Big(\frac{W^2}{2\epsilon^2}+\frac{\epsilon^2}{2}E\Big).$$ Hence, an application of (\ref{e_22}) yields $$E \leq \sqrt{2}W \sqrt{\delta}+\sqrt{2}\Big(\frac{W^2}{2\epsilon^2}+\frac{\epsilon^2}{2}E\Big)-\frac{1}{2}W^2=\sqrt{2}W\sqrt{\delta}+\Big(\frac{\sqrt{2}}{2\epsilon^2} -\frac{1}{2}\Big )W^2+\frac{\sqrt{2}{\epsilon^2}}{2}E;$$ using (\ref{e_11}) we have $$(1-c\epsilon^2)E \leq C(\delta^{\frac{1}{2}+\alpha}+\delta^{2\alpha}).$$ Picking $\epsilon$ sufficiently small completes the proof.
\end{proof}

\begin{cor} \label{bet1}
There exists an explicit dimensionless constant $\bar C=\bar C(\epsilon, M)>0$ so that for all $f \in \mathcal{F}(\epsilon, M)$, $$Ent(f) \leq \bar C\delta(f)+\frac{1}{2}|\mu(f)|^2,$$ where $\mu(f)$ is the barycenter of $f$ with respect to $d\gamma$. 
\end{cor}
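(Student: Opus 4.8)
The plan is to reduce to the unit-mass, zero-barycenter case and then invoke Lemma \ref{app} with $\alpha = 1/2$. First I would handle the reduction. Given $f \in \mathcal{F}(\epsilon, M)$, set $m = \|f\|_{L^1(d\gamma)}$ and $\mu = \mu(f)$ the barycenter, and form $\hat f(x) = f(x+\mu)e^{-(\mu\cdot x + |\mu|^2/2)}$ exactly as in the proof of Theorem \ref{logsob}. That computation already showed $\hat f$ has zero barycenter, that $\|\hat f\|_{L^1(d\gamma)} = m$, and — when $m=1$ — that $Ent(\hat f) = Ent(f) - |\mu|^2/2$ while $\delta(\hat f) = \delta(f)$; using positive $1$-homogeneity of $\delta$ and the scaling of the entropy, the general $m$ case follows by applying everything to $f/m$ and multiplying through by $m$. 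So it suffices to prove $Ent(g) \le \bar C\,\delta(g)$ for $g \in \mathcal{F}(\epsilon,M)$ of unit mass and zero barycenter, and then translate back.

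For such $g$, Corollary \ref{logsob0} gives $W_2(g\,d\gamma, d\gamma) \le C(\epsilon,M)\,\delta(g)^{1/2}$, i.e. estimate (\ref{e_11}) holds with $\alpha = 1/2$ for the class $\mathcal{F}(\epsilon,M)$ restricted to unit mass and zero barycenter. Lemma \ref{app} then yields
\[
Ent(g) \le \tilde C\big(\delta(g)^{\frac12 + \frac12} + \delta(g)^{2\cdot\frac12}\big) = 2\tilde C\,\delta(g),
\]
so $\bar C := 2\tilde C$ works, and $\tilde C$ is explicit and dimensionless since $C(\epsilon,M)$ is and since the constant $c$ appearing in the uniform log-convexity bound ($\lambda_1 \ge \epsilon$ here) is as well. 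Tracking through Lemma \ref{app}, $\bar C$ depends only on $\epsilon$ and $M$, as claimed. Finally, undoing the translation gives $Ent(f) = Ent(\hat f) + |\mu|^2/2 \le \bar C\,\delta(\hat f) + |\mu|^2/2 = \bar C\,\delta(f) + \tfrac12|\mu(f)|^2$ in the unit-mass case, and the homogeneity argument extends it to general mass.

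The only mild subtlety — really the one place to be careful rather than a genuine obstacle — is bookkeeping the non-unit-mass case: Lemma \ref{app} and the HWI inequality (\ref{e_22}) are stated for unit mass, so one must first normalize by $m$, apply the lemma to $f/m$, and then use that $Ent$, $\delta$, and $I$ are all positively $1$-homogeneous (and $|\mu(f/m)|^2 = |\mu(f)|^2/m^2$, which rescales correctly) to recover the stated inequality for $f$. Everything else is a direct chaining of Corollary \ref{logsob0} into Lemma \ref{app}.
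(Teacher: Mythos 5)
Your argument is correct and is essentially the paper's own proof: reduce to the unit-mass, zero-barycenter case via the tilted/translated function $\hat f$, feed the $W_2$ bound of Theorem \ref{logsob} (equivalently Corollary \ref{logsob0}) into Lemma \ref{app} with $\alpha=\tfrac12$ to get $Ent \leq \bar C\,\delta$, and then undo the reduction using $\delta(\hat f)=\delta(f)$ and $Ent(\hat f)=Ent(f)-\tfrac12|\mu(f)|^2$ together with positive $1$-homogeneity. Your explicit attention to the non-unit-mass normalization is, if anything, slightly more careful than the paper, which quotes those identities (derived for $m=1$) without further comment.
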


\begin{proof}
Given $f \in \mathcal{F}(\epsilon, M)$, let $$\hat f(x) := f(x)e^{-\big(\langle \mu(f), x\rangle + |\mu(f)|^2/2 + \log(||f||_{L^1(d\gamma)})\big)},$$ where $\mu(f)$ is the barycenter of $f$ with respect to $d\gamma$. Note that by Theorem \ref{logsob}, $$W_2\big(\hat f d\gamma, d\gamma\big) \leq C\delta\big(\hat f/||f||_{L^1(d\gamma)}\big)^{\frac{1}{2}}.$$ Hence, Lemma \ref{app} implies $$Ent\big(\hat f\big /||f||_{L^1(d\gamma)} \big) \leq \bar C \delta\big(\hat f /||f||_{L^1(d\gamma)}\big),$$ and so $$Ent(\hat f) \leq \bar C \delta(\hat f).$$ From the proof of Theorem \ref{logsob}, we know $\delta\big(\hat f\big)=\delta(f)$ and $Ent\big(\hat f\big)=Ent(f)-\frac{1}{2}|\mu(f)|^2$, and this yields the result.

\end{proof}

\begin{rem} \label{bet2}
Let $f \in \mathcal{F}(\epsilon, M)$ and consider $\hat f$ as in the proof of Corollary \ref{bet1}. Since $\hat f$ has zero barycenter,  $$Ent(\hat f) \leq \bar C\delta(\hat f).$$ Thus, $$Ent(\hat f) \leq \frac{\bar C}{2(\bar C+1)} I(\hat f);$$ now, as $C:=\frac{\bar C}{2(\bar C+1)}<\frac{1}{2}$, this improves the constant in the log-Sobolev inequality for functions in $\mathcal{F}(\epsilon, M)$ with unit mass and zero barycenter with respect to $d\gamma$. More generally, since $Ent(\hat f)=Ent(f)-\frac{1}{2}|\mu(f)|^2$ and $I\big(\hat f\big)=I(f)-|\mu(f)|^2$ (see e.g. the proof of Theorem \ref{logsob}), it follows that 
\begin{equation} \label{quan}
Ent(f) \leq CI(f)+((1/2)-C)|\mu(f)|^2.
\end{equation}
We note that as $M \rightarrow \infty$ or $\epsilon \rightarrow 0$, $\bar C=\bar C(\epsilon, M) \rightarrow \infty,$ and so when we enlarge our function space in this way, $C \rightarrow \frac{1}{2}$ -- the sharp log-Sobolev constant for general functions. Indeed, (\ref{quan}) measures the improvement in the log-Sobolev constant for the class $\mathcal{F}(\epsilon, M)$ in terms of the barycenter.   
\end{rem}

\noindent \textbf{Acknowledgments.} We wish to thank Alessio Figalli for suggesting the problem and for lively discussions on this topic. E. Indrei was supported by a departmental fellowship for graduate studies at the University of Texas at Austin. D. Marcon was supported by the UT Austin-Portugal partnership through the FCT doctoral fellowship SFRH/BD/33919/2009. Most of this work was completed while the authors were participating in the program ``Concentration month on nonlinear elliptic PDEs" at the University of Chicago -- the excellent research environment is kindly acknowledged.

%


\pagebreak 
\signei

\signdm

\end{document}